\documentclass{amsart}
\usepackage[utf8]{inputenc}
\usepackage{amsmath,amsfonts,amssymb,mathrsfs,amstext,amscd,latexsym,amsthm, mathtools}
\usepackage{hyperref}
\usepackage{xcolor}
\usepackage{pgfplots}
\usepgfplotslibrary{fillbetween}
\usepackage{graphicx}
\usepackage{subcaption}

\def\R{\mathbb{R}}
\def\e{\mathbf{e}}

\newtheorem{theorem}{Theorem}[section]
\newtheorem{lemma}[theorem]{Lemma}

\newtheorem{proposition}[theorem]{Proposition}
\newtheorem*{definition}{Definition}
\newtheorem*{remark}{Remark}

\DeclareMathOperator{\fff}{I}
\DeclareMathOperator{\sff}{II}
\DeclareMathOperator{\graph}{graph}

\title[Translating solutions to extrinsic geometric flows]{Rotationally symmetric translating solutions to extrinsic geometric flows}
\author{Sathyanarayanan Rengaswami}
\date{October 2020}

\usepackage{graphicx}

\begin{document}

\begin{abstract}
{Analogous to the bowl soliton of mean curvature flow, we construct rotationally symmetric translating solutions to a very large class of extrinsic curvature flows, namely those whose speeds are $\alpha$-homogeneous ($\alpha>0$), elliptic and symmetric with respect to the principal curvatures. We show that these solutions are necessarily convex, and give precise criteria for the speed functions which determine whether these translators are defined on all of $\mathbb{R}^{n}$ or contained in a cylinder. For speeds that are nonzero when at least one of the principal curvatures is nonzero, we are also able to describe the asymptotics of the translator at infinity.}
\end{abstract}

\maketitle

\setcounter{tocdepth}{1}

\tableofcontents

\section{Introduction}

Ancient solutions to geometric flows, i.e. solutions that are defined on a time interval of the form $(-\infty, T), -\infty<T \leq\infty$, are fundamental to understanding the global behaviour of these flows, because they arise as limits of rescalings near singularities \cite{MR1375255}. An important class of singularity models for extrinsic geometric flows are the translating solutions, so named because they evolve by ambient translation with constant velocity. Translating solutions arise in the analysis of singularities directly, as blow-up limits \cite{AngenentVelazquez,MR1316556}, and also indirectly, in the sense that convex ancient solutions tend to decompose into configurations of asymptotic translators \cite{ADS2,Betal,BLTatomic,BLT1,MR4127403,ChoiChoiDask}. In some cases, it can be shown that translating blow-up limits are necessarily rotationally symmetric \cite{MR3562950,MR3375531}.

In the present paper, we are interested in translating solutions to a very large class of extrinsic curvature flows. In particular, we shall prove the existence (and uniqueness) of \emph{bowl-type solitons}, that is, complete rotational graphs of class $C^2$ over either $\mathbb{R}^n$ or a ball $B_R$ of radius $R$ centred at the origin, with zero height and gradient at the origin. For a large class of speeds we also obtain an asymptotic expansion at (spatial) infinity.

Consider the extrinsic geometric flow
\begin{equation} \label{flow}
  \frac{\partial \Vec{X}}{\partial t}= -f\Vec{N}  
\end{equation}
of a one-parameter family $\Vec{X}:M^n\times I\to\R^{n+1}$ of immersed hypersurfaces, where $\Vec{N}$ denotes the (outward) unit normal field and the speed function $f$ is given by a function $f(\kappa_1,\dots\kappa_n)$ of the principal curvatures $\kappa_1\le\dots\le\kappa_n$ which is a positive function defined for positive principal curvatures, monotone increasing in each variable and $\alpha$-homogeneous, i.e. $f(\lambda z)= \lambda^\alpha f(z)$ for all $\lambda>0, z\in \mathbb{R}^n$. 

We are interested in the following questions: When do bowl-type solitons exist for this flow? When are these defined on all of $\mathbb{R}^n$, and when are they defined over bounded domains? What can we say about the asymptotics at infinity? In the case where $f$ is the mean curvature, Altschuler and Wu \cite{AW} proved that bowl-type solitons do exist. 
Their approach was to use elliptic PDE theory on general domains and deduce the rotationally symmetric case as a corollary. Later, Clutterbuck, Schn\"urer and Schulze in \cite{CSS} approached the rotational case directly, studying the corresponding ODE initial value problem, and were able to give precise asymptotics of the solution at infinity. Their approach was to use upper and lower barriers to the ODE solution. Note that this provides an alternative proof of the existence of the bowl soliton: by constructing a suitable lower barrier, one can solve the Dirichlet problem over a small ball, and then extend the solution uniquely using the ODE analysis. This is the approach taken in \cite[Theorem 13.38]{EGF}. Urbas \cite{Urb} studied soliton solutions (including bowl-type solitons) to flows by powers of the Gauss curvature, exploiting techniques from the study of Monge--Amp\`ere-type equations. {Santaella \cite{Santaella1,Santaella2} studied translating solutions to flows by ratios $Q_k=S_{k+1}/S_k$ of consecutive elementary symmetric polynomials $S_k$ in the principal curvatures. In particular, he constructed a bowl-type soliton for the $Q_{n-1}$-flow (which is perhaps better known as the \emph{harmonic mean curvature flow}).} But a study of the general situation of homogeneous flow speeds has not yet been carried out to our knowledge (even under additional concavity assumptions on the speed). Indeed, all prior work focuses on the analysis of specific speed functions. Moreover, in the case of Gauss curvature flows, the analysis is made simple due to the fact that the resulting ODE is separable. We will show here that, in fact, general properties of the speed function already carry a lot of information about the shape of the bowl-type soliton.

We use an approach similar to that of \cite{CSS}: we seek a function $u: I \to \mathbb{R}$ (where $I=[0,R), 0<R\leq \infty$ ) of class $C^2$ such that the graph $y=u(|x|)$ in $\mathbb{R}^{n+1}$ is a translating solution to \eqref{flow}, derive the ODE it should satisfy, and analyse it to prove that such solutions do exist under certain (very general) hypotheses on $f$. We also use ODE analysis to work out the asymptotics of $u$:. In contrast to \cite{CSS} (or \cite{EGF}), we also prove directly, by analysis of the translator ODE, that the solution is of class $C^2$ up to the origin and smooth elsewhere. {PDE theory is only invoked, in the form of a basic lemma, to infer smoothness at the origin.}  

For the purposes of the present paper, we make the following definition. 

\begin{definition}
A $C^1$ function $f:\Omega \subset \mathbb{R}^{n} \to [0,\infty)$ such that $\Omega$ contains $\Gamma^n
_+ \doteqdot\{x \in \mathbb{R}^{n} :z_i>0\}$, is an \emph{admissible speed function} if
\begin{itemize}
    \item $f$ is invariant under permutation of its variables $z_1,\dots,z_n$. (Symmetry)
    \item $\frac{\partial f}{\partial z_i}>0$ for each $i=1,\dots,n$. (Ellipticity)
    \item $f$ is $\alpha$-homogeneous for some $\alpha>0$. (Homogeneity)
\end{itemize}
\end{definition}

This is a very large class of speeds. It allows, for example, the commonly studied class of flows by positive powers of one-homogeneous roots of ratios of elementary symmetric polynomials in the principal curvatures \cite{An07}. Note, moreover, that no concavity or smoothness conditions beyond $C^1$ are required.

Define $\mathbf{e} \doteqdot (1,...,1) \in \mathbb{R}^{n-1}$. We have the following dichotomy for the behaviour of $f$ at the boundary of the positive cone $\Gamma_+$: either $f(0,\mathbf{e})>0$ or $f(0,\mathbf{e})=0$. We call such speeds $nondegenerate$ and $degenerate$ respectively. Note that, although our domains do not necessarily include the point $(0,\mathbf{e})$, we can define $f(0,\mathbf{e}) \doteqdot \lim_{s \to 0}f(s,\mathbf{e})$, since $f$ is increasing in all arguments and nonnegative. Observe that this degeneracy condition corresponds to whether or not the cylinder $\mathbf{S}^{n-1} \times \mathbb{R}$ is a stationary solution of the corresponding flow.

In the nondegenerate case, cylinders collapse into a line in finite time. Therefore, one does not expect complete translators to be contained in a cylinder, and one would expect bowl-type solitons to be entire. In the mean curvature case, it was proved in \cite{CSS} that the asymptotic expansion of the bowl soliton up to order $|x|^2$ is $u(|x|)=\frac{
|x|^2}{2(n-1)}+o(|x|^2)$ as $|x| \to \infty$. Our main result shows that this is indeed the case for any admissible nondegenerate speed and that such an asymptotic expansion does hold.

\begin{theorem}\label{thm:entire}
Let $f$ be an admissible speed function. There exists a unique bowl-type soliton for the corresponding flow. It is the graph of a convex radial function $y=u(|x|)$, where $u\in C^2([0,R))$, $R\in(0,\infty]$.
If $f$ is of class $C^{k,\alpha}$, $\alpha\in(0,1)$, then $u\in C^{k+2,\alpha}([0,R))$. If $f$ is non-degenerate, then $R=\infty$ and
\[u(|x|)=C|x|^{\alpha+1}+o(|x|^{\alpha+1})\;\;\text{as}\;\; \vert x\vert\to\infty\,,\]
where $C\doteqdot \frac{1}{(\alpha+1)f(0,\mathbf{e})}$.
\end{theorem}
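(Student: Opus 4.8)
The plan is to realize the bowl soliton as the graph of a solution to a single ODE and to analyze the associated initial value problem. For a radial graph $y=u(r)$, $r=|x|$, the principal curvatures are the radial curvature $\kappa_{\mathrm{rad}}=u''/(1+(u')^2)^{3/2}$ and the rotational curvature $\kappa_{\mathrm{rot}}=u'/(r\sqrt{1+(u')^2})$, the latter of multiplicity $n-1$. Writing $\omega=\sqrt{1+(u')^2}$ and normalizing the translation speed to $1$, the soliton condition $f=\langle N,e_{n+1}\rangle$ reads
$$f\bigl(\kappa_{\mathrm{rad}},\kappa_{\mathrm{rot}},\dots,\kappa_{\mathrm{rot}}\bigr)=\frac{1}{\omega},$$
which I would supplement with $u(0)=u'(0)=0$. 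Since $u'(r)/r\to u''(0)$ as $r\to0$, all curvatures tend to $u''(0)$ at the origin, and evaluating there forces, by $\alpha$-homogeneity, $u''(0)=f(1,\dots,1)^{-1/\alpha}$. This both fixes the only admissible initial curvature and is the seed of the uniqueness assertion.

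Next I would establish local existence, uniqueness and regularity near the singular point $r=0$. Ellipticity makes the map $t\mapsto f(t,\kappa_{\mathrm{rot}},\dots,\kappa_{\mathrm{rot}})$ strictly increasing, so for fixed $(r,u')$ the equation determines $\kappa_{\mathrm{rad}}$, hence $u''$, uniquely whenever $1/\omega$ lies in the range of $f$; this yields $u''=G(r,u')$ with $G$ inheriting the regularity of $f$ away from the origin. The $1/r$ singularity at the origin I would treat either by a contraction argument on the integral equation for $u'$ or by the basic PDE lemma alluded to above, obtaining a $C^2$ solution up to $r=0$ and, when $f\in C^{k,\alpha}$, a $C^{k+2,\alpha}$ one; standard ODE theory then continues it uniquely as long as the curvatures stay in $\Gamma^n_+$ and $\omega$ remains finite.

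The third step is convexity together with the identification of the maximal radius $R$. Both $\kappa_{\mathrm{rad}}>0$ and $\kappa_{\mathrm{rot}}>0$ hold for small $r>0$; I would show they persist, arguing that the solution cannot reach the boundary $\{\kappa_{\mathrm{rad}}=0\}$ of the cone (a maximum-principle/barrier argument obtained by differentiating the soliton relation and using monotonicity of $f$), which simultaneously keeps the equation well-defined. The solution is thus convex and extends until the gradient blows up, giving the dichotomy: either it is entire ($R=\infty$), or $u'\to\infty$ at a finite radius $R$, the graph becoming vertical so that the translator is contained in the solid cylinder $B_R\times\mathbb{R}$ — matching the earlier remark that a stationary cylinder needs $f(0,\mathbf{e})=0$.

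The final and hardest step is the nondegenerate case, where I expect to prove $R=\infty$ and the asymptotic expansion. The governing heuristic is a scaling count: if $u\sim Cr^{\alpha+1}$ then $\kappa_{\mathrm{rot}}\sim r^{-1}$ while $\kappa_{\mathrm{rad}}\sim r^{-2\alpha-1}$, so as $\alpha>0$ the radial curvature is asymptotically negligible and the soliton equation degenerates to $\kappa_{\mathrm{rot}}^{\alpha}f(0,\mathbf{e})\approx 1/\omega\approx 1/u'$; with $\kappa_{\mathrm{rot}}\sim r^{-1}$ this forces $u'\sim r^{\alpha}/f(0,\mathbf{e})$ and, on integrating, $C=\frac{1}{(\alpha+1)f(0,\mathbf{e})}$. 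Turning this into a theorem is the crux: I would first exclude a finite $R$, using $f(0,\mathbf{e})>0$ to keep $f$ bounded below when the radial curvature is small and thereby prevent finite-radius gradient blow-up, and then trap $u'$ between sub- and supersolution barriers of the form $c_{\pm}r^{\alpha}$, bootstrapping and controlling the error from the discarded $\kappa_{\mathrm{rad}}$-term to upgrade the leading order to the stated $o(|x|^{\alpha+1})$ expansion. The continuity convention defining $f(0,\mathbf{e})$ as a limit is exactly what renders the constant $C$ meaningful in this step.
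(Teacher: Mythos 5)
Your overall strategy---reduce to the singular ODE for $v=u'$, pin down $v'(0)=f(1,\dots,1)^{-1/\alpha}$, use barriers for entireness and for the $r^{\alpha}$-asymptotics of $u'$, and invoke elliptic regularity for smoothness at the tip---is the paper's strategy. But there is a genuine gap at precisely the step the paper spends most of its effort on: existence and uniqueness of a $C^1$ solution through the singular point $(r,v)=(0,0)$. Your first proposed mechanism, a contraction argument on the integral equation for $u'$, does not close in general: after solving for $u''$, the nonlinearity depends on $v$ only through the scale-invariant quantity $v/\big(r(1+v^2)^{\beta}\big)$, whose $v$-derivative is of size $1/r$ near the origin. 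In the natural norm $\sup_{(0,\delta]}|v_1-v_2|/r$ the integral operator then satisfies $d(Tv_1,Tv_2)\le L\, d(v_1,v_2)$ with $L$ essentially the Lipschitz constant of $g(\cdot,1)$ at $\gamma$, \emph{independently of how small $\delta$ is}, so Banach's fixed point theorem fails unless $L<1$ happens to hold; likewise a naive Gronwall bound for uniqueness produces a factor of order $(r/\delta')^{L}$ that blows up as $\delta'\to 0$. What rescues the argument---and what your proposal never identifies---is a sign condition: ellipticity of $f$ forces $g(\cdot,1)$ to be \emph{decreasing} in its first slot, so the dangerous $1/r$-term in the linearization of the right-hand side is negative and can be discarded, leaving a bounded Gronwall constant. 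The paper exploits exactly this, constructing approximating solutions started at $r=1/n$ on the level curve $v/\big(r(1+v^2)^\beta\big)=\gamma$, trapping them between the subsolution at level $\gamma$ and local supersolutions at levels $\gamma e^{\epsilon}$, and proving uniform convergence and $C^1$-uniqueness via the one-sided estimate. Note also that fixing $v'(0)=\gamma$, while necessary (your computation of it is the paper's), does not by itself yield uniqueness; the one-sided Gronwall estimate is what does.

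Your fallback ``or by the basic PDE lemma'' cannot fill this hole either: that lemma is a Schauder-type \emph{regularity} statement which presupposes a $C^2$ solution and upgrades it; it produces no solution (an existence route through the elliptic Dirichlet problem on a small ball, as in the book of Chow et al.\ cited in the introduction, would be legitimate, but it needs its own barrier constructions that you have not sketched). The remainder of your outline is sound and parallels the paper: your exclusion of finite-radius blow-up via $f\ge \kappa_{\mathrm{rot}}^{\alpha}f(0,\mathbf{e})$ is the paper's supersolution at level $\gamma_+=g_1(0,1)$ (where the slope field vanishes) in disguise; your proposed squeeze $c_-r^{\alpha}\le u'\le c_+r^{\alpha}$ with sharp constants is implemented in the paper by the exact supersolution at level $\gamma_+$ together with the eventual-subsolution lemma (level curves at any level $N<\gamma_+$ satisfy $w_N'\asymp r^{\alpha-1}$ while the right-hand side is $\asymp r^{3\alpha-1}$, so they are subsolutions for large $r$) plus a growth-rate contradiction showing the solution must eventually cross above each such level; and convexity requires no maximum principle or differentiated soliton relation---it is immediate from $v'=(1+v^2)^{1+\beta}g\big(v/(r(1+v^2)^{\beta}),1\big)>0$ once one checks $g>0$ along the solution (below the $\gamma_+$-barrier in the nondegenerate case, automatic in the degenerate case). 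Finally, to apply the regularity lemma at the origin you must formulate the graph equation using the symmetrized Weingarten matrix $P\cdot \mathrm{II}\cdot P$, since $g^{-1}\cdot \mathrm{II}$ is not symmetric; verifying ellipticity of the resulting operator $\hat f(Du,D^2u)$ is a step your sketch omits.
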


{It is somewhat surprising that the asymptotic expansion in Theorem \ref{thm:entire} is available under such general hypotheses on the speed. Indeed, the computation of lower order terms seems to be much more dependent on the specific form of $f$.}

Our next theorem concerns low homogeneities (regardless of whether they are degenerate or not.) 
\begin{theorem}\label{thm:low homogeneity}
If $f$ is an admissible speed function with $\alpha \leq 1/2$, then the corresponding bowl-type soliton is entire.
\end{theorem}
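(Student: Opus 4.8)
The nondegenerate case is already contained in Theorem~\ref{thm:entire}, so I would assume $f$ is degenerate and, seeking a contradiction, that the soliton produced by Theorem~\ref{thm:entire} has $R<\infty$. Writing $v\doteqdot u'$, convexity gives $u''\ge 0$, and the plan is to show that a finite $R$ forces the improper integral expressing $R$ as a function of $v$ to diverge once $\alpha\le 1/2$. First I would record the normal form of the translator equation for a radial convex graph: the radial and angular principal curvatures are $\kappa_{\mathrm r}=u''(1+v^2)^{-3/2}$ and $\kappa_{\mathrm a}=v\,r^{-1}(1+v^2)^{-1/2}$ (the latter with multiplicity $n-1$), and the equation reads $f(\kappa_{\mathrm r},\kappa_{\mathrm a}\e)=(1+v^2)^{-1/2}$. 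Setting $g(s)\doteqdot f(s,\e)$ and using $\alpha$-homogeneity to factor out $\kappa_{\mathrm a}^{\alpha}$, this rearranges (with $g^{-1}$ increasing by ellipticity) to
\[
u''=\frac{v(1+v^2)}{r}\,g^{-1}\!\left(\frac{r^{\alpha}}{v^{\alpha}(1+v^2)^{(1-\alpha)/2}}\right).
\]

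Next I would argue that $R<\infty$ forces $v=u'\to\infty$ as $r\to R^-$. Since the right-hand side above is locally Lipschitz in $(r,v)$ for $v$ in bounded sets (as $f\in C^1$, so $g^{-1}$ is smooth wherever its argument stays positive, which it does for bounded $v$ and $r$ near $R>0$), a solution with bounded slope extends past $R$, contradicting maximality; hence the profile develops a vertical tangent and the soliton sits inside the cylinder of radius $R$. As $v\mapsto r$ is then strictly monotone near the end, I can change variables and write
\[
R-r_0=\int_{v_0}^{\infty}\frac{r\,dv}{v(1+v^2)\,g^{-1}\!\left(\dfrac{r^{\alpha}}{v^{\alpha}(1+v^2)^{(1-\alpha)/2}}\right)},
\]
so the whole question reduces to the finiteness of this integral.

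The structural input that makes the threshold uniform over \emph{all} admissible degenerate speeds is an elementary lower bound for $g$ near $0$. Monotonicity together with homogeneity gives, for $0<s\le 1$,
\[
g(s)=f(s,1,\dots,1)\ge f(s,\dots,s)=s^{\alpha}f(1,\dots,1)=g(1)\,s^{\alpha},
\]
hence $g^{-1}(y)\le\bigl(y/g(1)\bigr)^{1/\alpha}$ for small $y$. Along the solution the argument of $g^{-1}$ is $\sim R^{\alpha}v^{-1}\to 0$ as $v\to\infty$, so this bound applies and yields $g^{-1}(\,\cdot\,)\lesssim R\,v^{-1/\alpha}$; the integrand is therefore bounded below by a multiple of $v^{1/\alpha-3}$. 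Since $\int^{\infty}v^{1/\alpha-3}\,dv$ diverges exactly when $1/\alpha-3\ge-1$, i.e. $\alpha\le 1/2$, the integral for $R-r_0$ diverges, contradicting $R<\infty$. This forces $R=\infty$, which is the claim.

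I would expect the two delicate points to be, first, rigorously justifying that a finite $R$ entails an actual vertical tangent ($v\to\infty$) rather than some milder breakdown — this is the continuation argument above — and, second, controlling the factor $r$ appearing inside the argument of $g^{-1}$, for which it suffices that $r$ remains in a compact subinterval of $(0,R]$, which is immediate. The genuinely essential ingredient, however, is the lower bound $g(s)\ge g(1)\,s^{\alpha}$: it is precisely what replaces any concavity or power-law hypothesis on $f$ and pins the divergence threshold at $\alpha=\tfrac12$ independently of the particular speed.
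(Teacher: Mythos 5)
Your argument is sound in its core estimate but takes a genuinely different route from the paper, which proves the theorem directly in two lines with no contradiction, no degenerate/nondegenerate case split, and no need to establish that $v\to\infty$ at a finite $R$: since the function $v_-$ defined by $\frac{v_-}{r(1+v_-^2)^\beta}=\gamma$ is a subsolution (Proposition \ref{subsol}), the solution satisfies $\frac{v}{r(1+v^2)^\beta}\ge\gamma$, so monotonicity of $g(\cdot,1)$ in its first slot gives $v'\le\gamma(1+v^2)^{1+\beta}$; for $\alpha\le 1/2$ one has $1+\beta\le 1/2$, hence $v'\le\sqrt{2}\,\gamma v$ once $v\ge 1$, and Lemma \ref{ODElemma} (with $\theta=1$) together with Theorem \ref{extensibility} yields $R=\infty$ outright. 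Your central inequality $f(s,\mathbf{e})\ge s^\alpha f(1,\dots,1)$ for $s\le 1$, equivalently $g^{-1}(y)\le (y/g(1))^{1/\alpha}$ in your one-variable notation, is the same structural input in disguise: unwinding your integrand bound gives precisely $v'\lesssim v^{3-1/\alpha}=v^{2(1+\beta)}$, the paper's growth rate, so the threshold $\alpha=1/2$ is pinned identically in both arguments. What your version buys is conceptual transparency (only comparison with the umbilic point is needed, and the main estimate avoids the subsolution); what it costs is the continuation step, which is where your one real gap sits. (Your appeal to Theorem \ref{thm:entire} for the nondegenerate case is legitimate and non-circular, since the paper proves that part independently.)

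The gap: your claim that $g^{-1}$ is smooth ``wherever its argument stays positive'' is false in general, because $g^{-1}$ is defined only on the range of $s\mapsto f(s,\mathbf{e})$, and for admissible degenerate speeds this range can be a \emph{bounded} interval. For instance $f=Q_{n-1}^{1/2}$ (square root of the harmonic mean curvature, with $\alpha=1/2$, so squarely within the scope of this theorem) has $f(s,\mathbf{e})=\bigl(s^{-1}+(n-1)\bigr)^{-1/2}$, whose range is $\bigl(0,(n-1)^{-1/2}\bigr)$. Positivity of the argument therefore does not keep the trajectory in a compact subset of the domain of the right-hand side: you must also rule out the argument of $g^{-1}$ approaching $\sup_s f(s,\mathbf{e})$, where $g^{-1}$ blows up and the Lipschitz bound fails even with $v$ bounded. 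The repair is exactly the subsolution bound you bypassed. Writing $z\doteqdot\frac{v}{r(1+v^2)^\beta}$ and using the paper's two-variable $g$, one has along the solution
\[
\frac{\kappa_{\mathrm r}}{\kappa_{\mathrm a}}=\frac{rv'}{v(1+v^2)}=\frac{g(z,1)}{z}\le\frac{\gamma}{\gamma}=1
\qquad\text{whenever } z\ge\gamma\,,
\]
so the argument of your $g^{-1}$ stays $\le f(1,\dots,1)$, compactly inside its domain; with that, bounded $v$ on $[r_0,R)$ does confine the trajectory to a compact set on which the right-hand side is Lipschitz, Theorem \ref{extensibility} extends the solution past $R$, and your contradiction argument closes. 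With this one insertion your proof is complete.
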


For higher homogeneities, we have examples of both possibilities (entireness or non-entireness of the corresponding bowl-type soliton), even within the ``mean curvature type'' setting $\alpha=1$. For example, when $f(\kappa_1,\dots,\kappa_n)=(\Sigma_{i=1}^n\kappa_i^{-1})^{-1}$ (harmonic mean curvature), the solution is defined on a ball, whereas when $f(\kappa_1,\kappa_2)=\sqrt{\kappa_1\kappa_2}$, which is the square root of the Gauss curvature, the solution is defined over $\mathbb{R}^2$. In the case where $f(\kappa_1,\kappa_2)=\kappa_1\kappa_2$ is the Gauss curvature, the solution is again defined over a ball. This difference is due to the behaviour of $x$ in the equation $f(x,y\mathbf{e})=1$ as $y\to \infty$. In the interest of completeness, we also provide a partial classification of this phenomenon. Note that since $f$ is strictly increasing with respect to each of the principal curvatures, the equation $f(x,y\mathbf{e})=1$ can be solved for $x$ in terms of $y$.

\begin{figure}
    \centering
    \begin{subfigure}[b]{0.32\textwidth}
        \begin{tikzpicture}
\begin{axis}[
    width=100,
    height=100,
    axis lines = left,
    xlabel = \(x\),
    ylabel = {\(y\)},
    xticklabels=\empty,
    yticklabels=\empty
]
\addplot [
    domain=0:1, 
    samples=100, 
    color=red,
]
{1-x};
\end{axis}
\end{tikzpicture}
        \caption{Mean curvature\\ \;}
        \label{fig:gull}
    \end{subfigure}
    \begin{subfigure}[b]{0.32\textwidth}
        \begin{tikzpicture}
\begin{axis}[
    width=100,
    height=100,
    axis lines = left,
    xlabel = \(x\),
    ylabel = {\(y\)},
    xticklabels=\empty,
    yticklabels=\empty
]
\addplot [
    domain=0:1, 
    samples=100, 
    color=red,
]
{1/x};
\end{axis}
\end{tikzpicture}
        \caption{Gauss curvature\\ \;}
        \label{fig:tiger}
    \end{subfigure}   
    \begin{subfigure}[b]{0.32\textwidth}
        \begin{tikzpicture}
\begin{axis}[
    width=100,
    height=100,
    axis lines = left,
    xlabel = \(x\),
    ylabel = {\(y\)},
    xticklabels=\empty,
    yticklabels=\empty
]
\addplot [
    domain=0:10, 
    samples=100, 
    color=black,
]
{0};
\addplot [
    dashed,
    domain=0:10, 
    samples=100, 
    color=black,
]
{1};
\addplot [
    dashed, 
    samples=100, 
    smooth,
    domain=0:6,
    black] coordinates {(1,0)(1,15)};
\addplot [
    domain=1:10, 
    samples=100, 
    color=red,
]
{x/(x-1)};
\end{axis}
\end{tikzpicture}
        \caption{Harmonic mean curvature}
        \label{fig:mouse}
    \end{subfigure}
    \caption{Level sets of the expression $f(x,y\mathbf{e})=1$.}\label{fig:animals}
\end{figure}
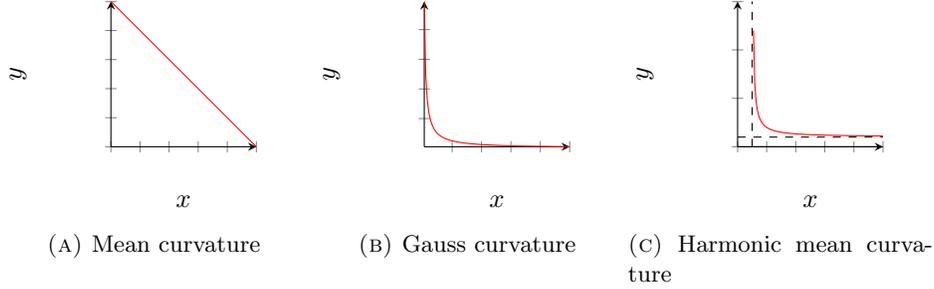

\begin{theorem}\label{L>0}
Let $f$ be an admissible speed function with $\alpha>1/2$ and $f(0,\mathbf{e})=0$. Consider the constraint equation $f(x,y\mathbf{e})=1$. If $x \to L>0$ as $y \to \infty$, then the bowl-type soliton corresponding to $f$ is defined on a ball $B$ and is asymptotic to the cylinder $\partial B \times \mathbb{R}$.
\end{theorem}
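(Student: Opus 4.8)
The plan is to recast the translator ODE in terms of the constraint curve $f(x,y\mathbf{e})=1$ and then to read off finiteness of $R$ from an integrability threshold. Writing $p\doteqdot u'$ and $s\doteqdot\sqrt{1+p^2}$, the soliton equation reads $f(\kappa_1,\kappa_2\mathbf{e})=1/s$, where $\kappa_1=u''/s^3$ is the radial and $\kappa_2=u'/(rs)$ the angular principal curvature. Multiplying by $s$ and invoking $\alpha$-homogeneity gives $f(s^{1/\alpha}\kappa_1,s^{1/\alpha}\kappa_2\mathbf{e})=1$, so the pair
\[
x=u''(1+p^2)^{\frac{1}{2\alpha}-\frac32},\qquad y=\frac{p}{r}(1+p^2)^{\frac{1}{2\alpha}-\frac12}
\]
lies on the curve $f(x,y\mathbf{e})=1$; that is, $x=x(y)$. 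By Theorem \ref{thm:entire} the soliton is convex with $u''>0$, so $p=u'$ is strictly increasing and I may treat $r$ as a function of $p\in[0,p_\infty)$, where $p_\infty\doteqdot\lim_{r\to R}u'$, governed by
\[
\frac{dr}{dp}=\frac{1}{x(y)}(1+p^2)^{\frac{1}{2\alpha}-\frac32},\qquad y=\frac{p}{r}(1+p^2)^{\frac{1}{2\alpha}-\frac12}.
\]

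First I would prove $R<\infty$. Here the two hypotheses enter decisively. The assumption $L>0$ forces $x$ to stay away from $0$: since $f$ is increasing and degenerate one has $x(y)>0$ on the whole curve, and as $x(y)\to L>0$ with $x(\cdot)$ continuous, the values of $x$ taken along the soliton satisfy $x(y)\ge c>0$ for some constant $c$. The assumption $\alpha>1/2$ makes the exponent $\tfrac{1}{2\alpha}-\tfrac32$ strictly less than $-\tfrac12$, so $(1+p^2)^{\frac{1}{2\alpha}-\frac32}$ is integrable on $[0,\infty)$. Combining these,
\[
R=\int_0^{p_\infty}\frac{dr}{dp}\,dp\le\frac1c\int_0^{\infty}(1+p^2)^{\frac{1}{2\alpha}-\frac32}\,dp<\infty,
\]
independently of the value of $p_\infty$. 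Hence the soliton is defined over a ball $B=B_R$. Notably, $R<\infty$ uses only the lower bound $x\ge c$ (which is exactly where $L>0$ is needed) together with $\alpha>1/2$, and involves no circular dependence on $R$.

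Next I would show that the graph verticalizes, i.e. $p_\infty=\infty$. If instead $p_\infty<\infty$, then at the finite radius $R>0$ the quantities $r,p,y,x(y)$ and $u''=x(y)(1+p^2)^{\frac32-\frac{1}{2\alpha}}$ all remain bounded, so $u$ has a $C^2$ limit at $R$; since the reformulated ODE is regular for $r>0$ (its right-hand side is continuous and locally Lipschitz in $p$, using that $y\mapsto x(y)$ is $C^1$ by ellipticity and the implicit function theorem), the soliton extends past $B_R$, contradicting the maximality of $R$. Thus $u'\to\infty$ as $r\to R$. To identify the asymptotic shape, note $\kappa_2=u'/(r\sqrt{1+u'^2})\to1/R$, while the soliton equation gives $f(\kappa_1,\kappa_2\mathbf{e})=1/\sqrt{1+u'^2}\to0$; since $\kappa_1\ge0$ by convexity, $f$ is increasing in its first slot, and $f(0,\tfrac1R\mathbf{e})=R^{-\alpha}f(0,\mathbf{e})=0$ by degeneracy, it follows that $\kappa_1\to0$. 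The principal curvatures therefore tend to $(0,\tfrac1R,\dots,\tfrac1R)$, those of $\partial B\times\mathbb{R}$, so the soliton is asymptotic to this cylinder.

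The main obstacle is the sharp interplay of the two hypotheses in the single estimate bounding $R$: extracting the uniform lower bound $x\ge c>0$ from $L>0$ (this is precisely what fails when $L=0$, as for the Gauss curvature, where the conclusion can change), and recognising $\alpha>1/2$ as exactly the integrability threshold for $(1+p^2)^{\frac{1}{2\alpha}-\frac32}$. The verticalization step is comparatively routine but hinges on the reformulated ODE being nonsingular for $r>0$, which in turn rests on the solvability and $C^1$ regularity of $y\mapsto x(y)$ guaranteed by ellipticity.
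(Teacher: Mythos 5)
Your proof is correct, and it takes a route that is essentially dual to the paper's while being more complete on the asymptotics. The paper keeps $r$ as the independent variable: from $g(y,1)>L$ it obtains $v'>(1+v^2)^{1+\beta}L>Lv^{2+2\beta}$ and invokes the blow-up dichotomy of Lemma \ref{ODElemma} with exponent $\theta=2+2\beta>1$ (equivalent to $\alpha>1/2$); this single comparison yields both $R<\infty$ \emph{and} $v\to\infty$ as $r\to R$ simultaneously, so no separate verticalization step is needed. You instead invert the ODE and bound $R=\int_0^{p_\infty}\frac{dr}{dp}\,dp\le c^{-1}\int_0^\infty(1+p^2)^{\frac{1}{2\alpha}-\frac32}\,dp$; since $\frac{1}{2\alpha}-\frac32=-(1+\beta)$, your integrability threshold is literally the separated-variables form of the paper's comparison, but because your bound on $R$ is insensitive to $p_\infty$, you must then argue separately that $p_\infty=\infty$ via the continuation argument, which the paper gets for free. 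In exchange, you prove the cylinder asymptotics explicitly ($\kappa_2\to 1/R$ and $\kappa_1\to 0$, using $f(\kappa_1,\kappa_2\mathbf{e})=(1+p^2)^{-1/2}\to 0$, positivity of $f$ in the open cone, and degeneracy $f(0,\mathbf{e})=0$), whereas the paper asserts this part without argument; that is genuinely additional content. Two small points to tighten: (i) the uniform bound $x(y)\ge c>0$ should be deduced from monotonicity rather than continuity --- $x=g(y,1)$ is decreasing in $y$ because $f$ is increasing in each slot, so $x(y)\ge L$ for \emph{all} $y$ in the domain, while continuity alone does not give a uniform positive lower bound over the noncompact range of $y$-values along the solution; (ii) in the extension argument you need the limit point $y^*=\frac{p_\infty}{R}(1+p_\infty^2)^{\frac{1}{2\alpha}-\frac12}$ to lie in the \emph{open} domain $D$ of $g(\cdot,1)$ so that the slope field is regular there; this follows from the subsolution bound $\frac{v}{r(1+v^2)^\beta}\ge\gamma$ of Proposition \ref{subsol} together with the facts that $\gamma$ is interior to $D$ and that $D$ is unbounded above for degenerate speeds.
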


We may ask what happens when $L=0$. In this case, whether the solution is entire or over a bounded domain is determined by the rate at which $x$ decays to $0$ as $y\to \infty$. Recall that $f(t)=O(g(t))$ as $t \to \infty$ if there exists $C>0$ such that, for sufficiently large $t$,  $|f(t)/g(t)| \leq C$.

\begin{theorem}\label{L>0 refined}
Let $f$ be an admissible speed function with $\alpha>1/2$ and $f(0,\mathbf{e})=0$. Consider the constraint equation $f(x,y\mathbf{e})=1$. Suppose that $x \to 0$ as $y \to \infty$.
\begin{itemize}
\item[(i)] If $x=O(y^{-(2\alpha-1)})$ then the corresponding bowl-type soliton is entire.
\item[(ii)] If there exist constants $C>0,k \in (0,2\alpha-1)$ such that $x \geq C y^{-k}$ for sufficiently large $y$, then the corresponding bowl-type soliton is defined over a bounded domain.
\end{itemize}
\end{theorem}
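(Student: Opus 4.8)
The plan is to reduce the dichotomy to the convergence of a single integral coming from the translator ODE, and then to read off the two cases from elementary comparison estimates. First I would start from the profile ODE for the radial graph $y=u(|x|)$, writing $\theta$ for the inclination angle of the profile curve (so $\tan\theta=u'$ and $\cos\theta=(1+u'^2)^{-1/2}$). Then the radial and angular principal curvatures are $\kappa_{\mathrm{rad}}=\cos\theta\,\frac{d\theta}{dr}$ and $\kappa_{\mathrm{ang}}=\frac{\sin\theta}{r}$, and the translator equation reads $f(\kappa_{\mathrm{rad}},\kappa_{\mathrm{ang}}\mathbf{e})=\cos\theta$. Dividing through by $(\cos\theta)^{1/\alpha}$ and using $\alpha$-homogeneity, the normalized curvatures $x\doteqdot\kappa_{\mathrm{rad}}(\cos\theta)^{-1/\alpha}$ and $y\doteqdot\kappa_{\mathrm{ang}}(\cos\theta)^{-1/\alpha}$ satisfy exactly the constraint $f(x,y\mathbf{e})=1$, so $x=x(y)$ is the function in the hypotheses (well defined and monotone by ellipticity). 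This yields the two relations I will use throughout:
\[
\frac{dr}{d\theta}=\frac{(\cos\theta)^{1-1/\alpha}}{x(y)},\qquad y=\frac{\sin\theta}{r}(\cos\theta)^{-1/\alpha}.
\]
Since the soliton is convex, $\theta$ increases monotonically from $0$ toward $\pi/2$, $y\to\infty$ as $\theta\to\pi/2$, and the soliton is entire precisely when $R\doteqdot\lim_{\theta\to\pi/2}r(\theta)=\infty$.

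For part (ii) I would argue by an upper comparison. The hypothesis $x\ge Cy^{-k}$ gives $1/x(y)\le C^{-1}y^{k}$, and substituting $y=\frac{\sin\theta}{r}(\cos\theta)^{-1/\alpha}$ into the first relation produces
\[
r^{k}\,\frac{dr}{d\theta}\le C^{-1}(\sin\theta)^{k}(\cos\theta)^{\,1-(1+k)/\alpha}.
\]
Integrating over $\theta\in[\theta_0,\pi/2)$, the right-hand side is finite exactly when the exponent of $\cos\theta$ exceeds $-1$, that is, when $k<2\alpha-1$, which is the assumed range. Hence $r^{k+1}$ stays bounded and $R<\infty$, so the soliton is defined over a bounded domain.

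For part (i) I would run the matching lower comparison, and this is where the threshold exponent reveals itself. From $x=O(y^{-(2\alpha-1)})$ one gets $1/x(y)\ge C^{-1}y^{2\alpha-1}$, and substituting as before the power of $\cos\theta$ collapses to exactly $-1$:
\[
r^{2\alpha-1}\,\frac{dr}{d\theta}\ge C^{-1}(\sin\theta)^{2\alpha-1}(\cos\theta)^{-1}.
\]
The right-hand side integrates to a logarithmic divergence as $\theta\to\pi/2$, forcing $r^{2\alpha}\to\infty$ and hence $R=\infty$, so the soliton is entire.

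The hard part will not be the comparison estimates themselves but justifying that only the asymptotic behaviour of $x(y)$ matters. The hypotheses control $x(y)$ only for large $y$, while the relations above involve $x$ along the entire solution; I must confirm that the solution really does enter the regime $\theta\to\pi/2$, $y\to\infty$ (so the bounds hold on a full neighbourhood of $\pi/2$ and the tail integral is decisive), and that the contributions from $\theta\in[0,\theta_0]$ are finite and harmless. This, together with checking that $f(x,y\mathbf{e})=1$ determines $x(y)$ globally along the profile, is where the earlier structural results (existence, convexity, and the monotonicity of $r$ in $\theta$) are invoked. The truly delicate point is the exact cancellation producing $(\cos\theta)^{-1}$ in part (i): it is precisely this borderline logarithmic divergence that makes $2\alpha-1$ the sharp threshold separating the two regimes, and it explains why (i) permits equality at the critical rate while (ii) must stay strictly below it.
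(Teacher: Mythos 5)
Your proposal is correct, and it takes a genuinely different route from the paper. You pass to the turning angle $\theta$ of the profile curve and exploit the identity $dr/d\theta=(\cos\theta)^{1-1/\alpha}/x(y)$ with $y=\frac{\sin\theta}{r}(\cos\theta)^{-1/\alpha}$ (which, with $v=\tan\theta$, is exactly the paper's quantity $\frac{v}{r(1+v^2)^\beta}$, and your angular ODE is exactly equation \eqref{eqn:alphaode} in disguise); entireness then becomes divergence of an explicit $\theta$-integral, and your exponent computations check out: $1-(1+k)/\alpha>-1$ iff $k<2\alpha-1$ in case (ii), and exact collapse to $(\cos\theta)^{-1}$ at the critical rate in case (i). The paper instead stays in the $(r,v)$ variables: it first proves $\frac{v}{r(1+v^2)^\beta}\to\infty$ by contradiction (if it were bounded, $v'\geq\epsilon(1+v^2)^{1+\beta}$ would blow up at finite $r$ since $2+2\beta>1$, contradicting global existence; Lemma \ref{eventualsub} upgrades unboundedness to convergence to infinity), and then closes case (i) via the non-blow-up comparison $v'\leq C'vr^{2\alpha-1}$ and case (ii) via $v'\geq C'v^{1+\epsilon}$ with $\epsilon/(1-2\beta)=2\alpha-1-k$, both against Lemma \ref{ODElemma}. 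Your method buys more: it yields an explicit quantitative bound on $R$ in case (ii), it makes the sharpness of the threshold visible as the borderline logarithmic divergence, and it effectively delivers the integrability criterion on $g(\cdot,1)$ that the paper only speculates about in its closing remark; the paper's method buys uniformity with the rest of its ODE machinery and avoids the change of variables.

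Two points you flagged do need shoring up, and both are easily handled. First, the identification ``entire iff $\lim_{\theta\to\pi/2}r(\theta)=\infty$'' presupposes that the maximal solution always sweeps out all of $[0,\pi/2)$: if $R=\infty$ this follows since $v\geq v_-\asymp\gamma^\alpha r^\alpha\to\infty$ by the subsolution of Proposition \ref{subsol}, while if $R<\infty$ it follows since $v$ bounded near $r=R$ would contradict Theorem \ref{extensibility} (in the degenerate case the domain of $g(\cdot,1)$ is unbounded above, so the trajectory stays in a compact subset of the slope field's domain). Second, your hypotheses control $x(y)$ only for $y\geq Y_0$; rather than proving $y\to\infty$ as the paper does, in your framework it is simpler to observe that along the solution $y\geq\gamma$ (again by the subsolution) and that $x(y)=g(y,1)$ is positive and strictly decreasing, so the asymptotic bounds extend to all $y\geq\gamma$ after adjusting constants: for (i), $x\leq\gamma\leq\gamma Y_0^{2\alpha-1}y^{-(2\alpha-1)}$ on $[\gamma,Y_0]$, and for (ii), $x\geq g(Y_0,1)\geq g(Y_0,1)\gamma^k y^{-k}$ there. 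With these insertions your comparison inequalities hold on all of $[\theta_0,\pi/2)$ and the argument is complete.
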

We note here that this theorem does not describe what happens in case $x=O(y^{-k})$ for all $k<2\alpha-1$ but $x\neq O(y^{-(2\alpha-1)})$, for example $x=y^{-2(\alpha-1)}\log y$. However one usually does not encounter such extreme cases, as $f$ is typically an algebraic combination of the principal curvatures in most applications.

We will be deducing the above theorems as consequences of the analysis of the \emph{translator ODE}, which is an equation of the form

\begin{equation*}
    v'=(1+v^2)^{1+\beta}g\left(\frac{v}{r(1+v^2)^\beta},1\right)\,,
\end{equation*}
where $v$ refers to the slope of the profile curve, $\beta$ depends on $\alpha$ and the function $g$ depends on $f$. Since we seek bowl-type solitons, we impose the initial condition $v(0)=0$. As $(r,v)=(0,0)$ is not in the domain of the right hand side, this equation has a coordinate singularity at the origin, and hence its solvability near the origin does not follow from standard ODE theory. However, away from the origin, local solvability of the equation does follow from the Picard--Lindel{\"o}f theorem, and extensibility of the solution to its maximal interval follows from the existence of subsolutions and supersolutions to the ODE. (See \S \ref{sec:prelims} for a review of the requisite ODE theory.) We use this fact to construct a sequence of solutions to the ODE whose initial values converge to $(0,0)$, and show that the limit of these solutions is indeed the unique $C^1-$solution to the ODE with initial condition $(0,0)$. When $f$ satisfies the hypotheses of Theorem \ref{thm:entire}, we show that $\lim_{r \to \infty}\frac{v(r)}{r(1+v(r)^2)^\beta}$ converges to something finite, and thereby deduce the asymptotic expansion of the bowl-type soliton as $|x| \to \infty$. In the other cases, we are able to infer whether the solution is entire or not by applying
Lemma \ref{ODElemma}.

\subsection*{Acknowledgements}
The author would like to thank Dr. Mathew Langford and Dr. Theodora Bourni of the University of Tennessee, Knoxville for suggesting this problem, providing guidance in both literature review and in technical aspects throughout the research phase, proofreading the arguments and offering countless hours of Zoom advising during a time that made in-person communication challenging. The author would also like to express his gratitude to the Office of Research and Engagement of the University of Tennessee for the funding they provided during Summer 2021. 

\section{Preliminaries}\label{sec:prelims}

\subsection{ODE initial value problems}
We review the basics of ODE theory and some of the tools used in this paper. We use \cite{ODE} as a reference for this theory. The form of ODE most amenable to analysis is
\begin{equation} \label{generalode}
  x'=f(t,x)\,.
\end{equation}
Here, $x$ is an unknown ($\R^n$-valued) function of $t$, and $f$ is some known function on some open subset $U \subset \R \times \R^n$, and $x'\doteqdot dx/dt$. For our purposes, $x$ will be real-valued. Solutions to ODE can be visualized and analysed qualitatively by using a \emph{direction field}, which is the vector field $V(t,x)=(1,f(t,x))$ that gives the slope of a solution at each point. Integral curves of the direction field correspond to solutions of the ODE.

A differentiable function $x_+(t)$ satisfying
\[x_+'(t)>f(t,x_+(t))\] is called a \emph{supersolution} to (\ref{generalode}). Similarly, a differentiable function $x_-(t)$ satisfying
\[x_-'(t)<f(t,x_-(t))\] is called a \emph{subsolution} to \eqref{generalode}.

\newtheorem{Lemma}{Lemma}
\begin{lemma}
Let $x_+(t)$, $x_-(t)$ be super, sub solutions of the differential equation $x'=f(t,x)$ on $[t_0,T)$ respectively. For every solution $x(t)$ on $[t_0,T)$ we have 
\[x(t) < x_+(t), t \in [t_0,T) \text{ whenever }  x(t_0) \leq x_+(t_0)\]
respectively
\[x_-(t) < x(t), t \in [t_0,T) \text{ whenever } x_-(t_0) \leq x(t_0)\]
\end{lemma}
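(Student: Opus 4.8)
The plan is to prove the supersolution half by a first-crossing (continuity) argument and then obtain the subsolution half by the symmetric reasoning. I would introduce the difference $w(t) \doteqdot x_+(t) - x(t)$, which is differentiable on $[t_0,T)$ and satisfies $w(t_0) \geq 0$ by hypothesis; the goal is to show $w(t) > 0$ for all $t \in (t_0,T)$. The one computation that drives everything is that $w$ is strictly increasing through any of its zeros: if $w(\tau) = 0$ for some $\tau \in [t_0,T)$, then $x_+(\tau) = x(\tau)$, so $f(\tau,x_+(\tau)) = f(\tau,x(\tau))$, and therefore
\[ w'(\tau) = x_+'(\tau) - x'(\tau) > f(\tau,x_+(\tau)) - f(\tau,x(\tau)) = 0, \]
where the strict inequality is exactly the defining property of a supersolution together with $x$ being a genuine solution. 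I would emphasise that this step uses only that $f$ is a well-defined function --- no continuity or Lipschitz hypothesis enters --- and that it is precisely the strictness built into the definition of supersolution that makes the argument elementary.

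From this observation it follows at once that $w > 0$ on some right-neighbourhood of $t_0$: if $w(t_0) > 0$ this is immediate from continuity, and if $w(t_0) = 0$ then $w'(t_0) > 0$ pushes $w$ positive for $t$ slightly greater than $t_0$. To promote this to the whole interval I would argue by contradiction, setting $t_1 \doteqdot \inf\{t \in (t_0,T) : w(t) \leq 0\}$ under the assumption that this set is nonempty. Then $t_1 > t_0$, the strict inequality $w > 0$ holds on $(t_0,t_1)$, and since $w$ is continuous and $\{w \leq 0\}$ is closed we get $w(t_1) = 0$. The key observation now forces $w'(t_1) > 0$; on the other hand, since $w(t) > 0 = w(t_1)$ for all $t < t_1$, every left difference quotient of $w$ at $t_1$ is negative, so $w'(t_1) \leq 0$. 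This contradiction shows the set is empty, i.e.\ $w > 0$ throughout $(t_0,T)$, which is the assertion $x(t) < x_+(t)$.

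I expect the only genuine subtlety to be the bookkeeping at the endpoint $t_0$: when $x(t_0) = x_+(t_0)$ the two graphs touch at $t_0$, so the strict inequality can hold only for $t > t_0$, and the conclusion must be read in that sense. Everything else is routine. The subsolution statement follows verbatim by applying the same argument to $\tilde w(t) \doteqdot x(t) - x_-(t)$: at a zero of $\tilde w$ one has $x(\tau) = x_-(\tau)$, and the reverse strict inequality defining a subsolution gives $\tilde w'(\tau) > 0$, so the identical first-crossing argument yields $x_-(t) < x(t)$ on $(t_0,T)$.
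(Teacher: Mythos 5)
Your proof is correct and complete. Be aware, however, that the paper contains no proof of this lemma to compare against: it is stated in the preliminaries as quoted background from the ODE reference \cite{ODE}. Your argument is precisely the standard one for this comparison lemma: setting $w \doteqdot x_+ - x$, the strictness in the definition of a supersolution gives $w'(\tau) = x_+'(\tau) - x'(\tau) > f(\tau,x_+(\tau)) - f(\tau,x(\tau)) = 0$ at any zero $\tau$ of $w$, so $w$ can only cross zero from below, and your first-crossing time $t_1$ produces the clean contradiction between $w'(t_1) > 0$ and the bound $w'(t_1) \leq 0$ coming from the nonpositive left difference quotients. Two of your side remarks deserve emphasis. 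First, your reading of the conclusion is the correct repair of the statement as printed: when $x(t_0) = x_+(t_0)$ the strict inequality can only hold on $(t_0,T)$, and that is how the source states it. Second, your observation that no continuity or Lipschitz hypothesis on $f$ enters is accurate and not merely cosmetic: the weak-inequality variant alluded to in the paper's subsequent remark genuinely requires more from $f$ (e.g.\ local Lipschitz continuity, via Gronwall or an approximation argument), as the example $f(t,x) = 2\sqrt{|x|}$ with $x_+ \equiv 0$ and the solution $x(t) = t^2$ shows; it is exactly the strictness built into the definitions that makes your elementary pointwise argument self-contained.
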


\begin{remark}
    If one replaces strong inequality by weak inequality in the definitions of sub and supersolutions, one gets weak inequalities instead of strong ones in the above lemma.
\end{remark}

A problem of the form
\begin{equation}\label{prob}
x'=f(t,x), \hspace{0.5cm} x(t_0)=x_0
\end{equation}
is called an \emph{initial value problem}, or \emph{IVP}. 

We recall the fundamental local existence-uniqueness and extensibility results.

\begin{theorem} \label{existence}
Suppose $f \in C(U, \mathbb{R}^n$) where $U$ is an open subset of $\R \times \R^n$ and $(t_0,x_0)\in U$.  If $f$ is locally Lipschitz continuous in the second argument, uniformly with respect to the first, then there exists a unique local solution $\Bar{x}(t)\in C(I)$ of the IVP (\ref{prob}), where $I$ is some interval around $t_0$.

Moreover, if $f \in C^k(U,\mathbb{R}^n)$, then $\Bar{x}(t)\in C^{k+1}(I)$
\end{theorem}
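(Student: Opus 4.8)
The plan is to prove this as a standard application of the Banach fixed point theorem (the contraction mapping principle), in the Picard--Lindel\"of style. First I would exploit the openness of $U$ and the continuity of $f$ to produce a compact cylinder $C \doteqdot \{(t,x) : |t-t_0|\le a,\ |x-x_0|\le b\}\subset U$ on which $f$ is bounded, say $|f|\le M$, and on which the local Lipschitz hypothesis supplies a constant $L$ with $|f(t,x)-f(t,y)|\le L|x-y|$ for all $(t,x),(t,y)\in C$. The decisive reformulation is that $\Bar{x}$ solves the IVP \eqref{prob} on an interval $I$ around $t_0$ if and only if it solves the integral equation $\Bar{x}(t)=x_0+\int_{t_0}^t f(s,\Bar{x}(s))\,ds$; this equivalence is just the fundamental theorem of calculus together with the continuity of $s\mapsto f(s,\Bar{x}(s))$.

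Next I would fix $I=[t_0-\delta,t_0+\delta]$ with $\delta\doteqdot\min\{a,\,b/M,\,1/(2L)\}$ and work in the complete metric space $\mathcal{B}$ of continuous maps $I\to\overline{B_b(x_0)}$ under the sup-metric. The integral operator $(T\phi)(t)\doteqdot x_0+\int_{t_0}^t f(s,\phi(s))\,ds$ sends $\mathcal{B}$ into itself, because $|(T\phi)(t)-x_0|\le M|t-t_0|\le M\delta\le b$ keeps the image inside the ball, while $\delta\le a$ keeps the arguments inside $C$. The bound $L\delta\le 1/2$ then makes $T$ a contraction, since $\|T\phi-T\psi\|_\infty\le L\delta\,\|\phi-\psi\|_\infty\le\tfrac12\|\phi-\psi\|_\infty$. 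The contraction mapping principle furnishes a unique fixed point $\Bar{x}\in\mathcal{B}$, which is exactly the unique solution of the integral equation, hence of \eqref{prob}; any putative second solution lands in $\mathcal{B}$ on a sufficiently small subinterval by continuity, so it must agree with $\Bar{x}$ there.

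For the regularity statement I would bootstrap from the integral equation. The fixed point $\Bar{x}$ is continuous by construction, so $s\mapsto f(s,\Bar{x}(s))$ is continuous and the integral equation gives $\Bar{x}\in C^1(I)$ with $\Bar{x}'(t)=f(t,\Bar{x}(t))$. If in addition $f\in C^k(U,\mathbb{R}^n)$, then the right-hand side $f(t,\Bar{x}(t))$ is a composition of a $C^k$ function with a $C^1$ function, hence $C^1$, which forces $\Bar{x}\in C^2$; each such gain in the regularity of $\Bar{x}$ feeds back through $f\in C^k$ to raise it again, and a routine induction terminates at $\Bar{x}\in C^{k+1}(I)$.

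The main obstacle is the calibrated choice of $\delta$: it has to be small enough to keep the Picard iterates inside the cylinder $C$, where alone we control both $|f|$ and its Lipschitz constant, and simultaneously small enough that $T$ is a strict contraction. Once this single constant is pinned down, the existence, uniqueness, and regularity all follow mechanically; the only point demanding care is verifying that $T$ truly maps $\mathcal{B}$ into itself rather than merely being well defined.
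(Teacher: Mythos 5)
Your proposal is correct and is precisely the classical Picard--Lindel\"of contraction-mapping argument (compact cylinder, integral reformulation, Banach fixed point on $C\big(I,\overline{B_b(x_0)}\big)$, bootstrap for higher regularity); the paper does not prove this theorem itself but recalls it from its reference \cite{ODE}, where the same fixed-point proof is given. So your route matches the proof the paper relies on, and no gaps are present beyond the routine (and standard) step of extracting a single Lipschitz constant on the chosen compact cylinder from the local hypothesis.
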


\begin{theorem} \label{extensibility}
Let $\phi(t)$ be a solution of the IVP (\ref{prob}) defined on the interval $(t_-,t_+)$. Suppose there is a compact subset $[t_0,t_+] \times C \subset U$ such that $\phi(t_m) \in C$ for some sequence $t_m \in [t_0,t_+)$ converging to $t_+$ Then there exists an extension to the interval $(t_-,t_++\epsilon)$ for some $\epsilon>0$.

In particular, if there is such a compact set $C$ for every $t_+>0$ ($C$ might depend on $t_+$), then the solution exists for all $t>t_0$.

The analogous statement holds for an extension to $(t_--\epsilon,t_+)$
\end{theorem}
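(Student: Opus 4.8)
The plan is to extend $\phi$ past $t_+$ by producing a fresh local solution through a limit point of the trajectory and then gluing it to $\phi$ using uniqueness. The one nontrivial ingredient is a refinement of the local existence theorem (Theorem \ref{existence}) giving a \emph{uniform} existence time for all initial data in a neighborhood of a point of $U$. First I would exploit compactness of $C$: since $\phi(t_m)\in C$ along the sequence $t_m\to t_+$, some subsequence $\phi(t_{m_k})$ converges to a point $p\in C$. Because $[t_0,t_+]\times C\subset U$, the limit $(t_+,p)$ lies in the open set $U$, so there is a closed box $\overline{B}\subset U$ centered at $(t_+,p)$ on which $f$ is bounded, say $|f|\le M$, and Lipschitz in the second argument with some constant $L$ (local Lipschitz continuity together with compactness of $\overline{B}$).

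Next I would establish the uniform existence time. Running the Picard--Lindel\"of contraction of Theorem \ref{existence} on a fixed sub-box shows that there exist $\delta>0$ and a neighborhood $W\ni(t_+,p)$ such that, for \emph{every} initial datum $(s,y)\in W$, the IVP \eqref{prob} admits a solution on $(s-\delta,s+\delta)$, where $\delta$ depends only on $M$, $L$, and the size of the box, and not on the particular $(s,y)$. With this in hand, choose $k$ so large that $(t_{m_k},\phi(t_{m_k}))\in W$ and $t_{m_k}>t_+-\delta$. The solution $\psi$ issuing from $(t_{m_k},\phi(t_{m_k}))$ then exists on $(t_{m_k}-\delta,\,t_{m_k}+\delta)$, an interval whose right endpoint $t_{m_k}+\delta$ exceeds $t_+$. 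By the uniqueness clause of Theorem \ref{existence}, $\psi$ agrees with $\phi$ on the overlap $(\,\cdot\,,t_+)$, so $\psi$ furnishes the claimed extension to $(t_-,t_++\epsilon)$ with $\epsilon \doteqdot t_{m_k}+\delta-t_+>0$.

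For the ``in particular'' statement, let $T^\ast$ denote the supremum of the maximal interval of existence and suppose $T^\ast<\infty$. Applying the argument just given with $t_+=T^\ast$ and the hypothesized compact set $[t_0,T^\ast]\times C$ produces an extension beyond $T^\ast$, contradicting maximality; hence $T^\ast=\infty$ and the solution exists for all $t>t_0$. The backward statement for $(t_--\epsilon,t_+)$ follows by the reflection $t\mapsto -t$, which carries a solution of $x'=f(t,x)$ to a solution of $\tilde x'=-f(-t,\tilde x)$ (still continuous and locally Lipschitz in its second argument on the reflected domain) and thereby reduces the backward case to the forward one.

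The main obstacle is the uniform existence time used in the second step. I expect the rest to be essentially bookkeeping, but this point genuinely requires reopening the proof of Theorem \ref{existence}: one must verify that the radius of the invariant box and the contraction factor in the Picard iteration can be chosen to depend on the initial condition only through a fixed compact subset of $U$, so that a single $\delta$ serves the whole neighborhood $W$. It is precisely this uniformity that allows a trajectory which merely \emph{returns} to $C$ along a sequence $t_m\to t_+$ to be continued, without first having to prove that the full limit $\lim_{t\to t_+}\phi(t)$ exists.
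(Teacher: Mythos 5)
Your argument is correct and coincides with the standard proof of this result: the paper itself does not prove Theorem \ref{extensibility} but quotes it from \cite{ODE}, where the proof proceeds exactly as you do — extract a convergent subsequence $\phi(t_{m_k})\to p$ by compactness of $C$, invoke a uniform Picard--Lindel\"of existence time on a compact box around $(t_+,p)\in U$, glue the fresh solution to $\phi$ by uniqueness, derive the ``in particular'' clause by contradiction with maximality, and handle the backward direction by the reflection $t\mapsto -t$. The uniformity of $\delta$ that you flag as the crux is indeed the key refinement, and it follows as you describe from the Picard iteration with constants controlled on a fixed compact sub-box.
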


\begin{remark}
    The form $[t_0,t_+] \times C$ for the compact set can be relaxed. We can simply require some compact set $K \subset U$, such that the projection of $K$ onto the $t$-coordinate contains $[t_0,t_+]$, and that $(t_m, \phi(t_m))\in K$. 
\end{remark}

The above theorem is an extensibility result. For instance, it guarantees that if you can solve your ODE locally at any $t=t_0$, and have super and subsolutions that exist for all $t>t_0$, then your solution extends to all ``future times'' $t>t_0$.

The following theorem provides estimates for the difference between two solutions of an ODE. This is a special case of \cite[Theorem 2.8]{ODE}.

\begin{theorem} \label{ODEestimate}
    Suppose $f \in C(U, \R^n)$, $f=f(t,x)$ is Lipschitz continuous (with Lipschitz constant L) in the second argument, uniformly with respect to the first. If $x(t), y(t)$ are solutions of the respective IVPs
    \begin{align*}
        x'(t)=f(t,x), \, x(t_0)=x_0\\
        y'(t)=f(t,y), \, y(t_0)=y_0\,,
    \end{align*}
    then,
    \begin{equation*}
        |x(t)-y(t)| \leq |x_0-y_0| e^{L|t-t_0|}
    \end{equation*}
    for as long as both $x(t), y(t)$ are defined.
\end{theorem}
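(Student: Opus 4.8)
The plan is to reduce the claimed differential estimate to a scalar integral inequality and then close it with the classical Gronwall argument. First I would pass to the integral formulation of both IVPs, which is legitimate because $x,y$ are continuously differentiable solutions. Subtracting the two integral equations gives
\[
x(t)-y(t)=(x_0-y_0)+\int_{t_0}^{t}\bigl(f(s,x(s))-f(s,y(s))\bigr)\,ds\,.
\]
Taking norms, using the triangle inequality for integrals, and invoking the uniform Lipschitz hypothesis $|f(s,x)-f(s,y)|\le L|x-y|$ yields, for $t\ge t_0$,
\[
\varphi(t)\le |x_0-y_0|+L\int_{t_0}^{t}\varphi(s)\,ds\,,\qquad \varphi(t)\doteqdot|x(t)-y(t)|\,.
\]

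Second, I would apply Gronwall's inequality to this relation. Setting $\Phi(t)\doteqdot|x_0-y_0|+L\int_{t_0}^{t}\varphi(s)\,ds$, so that $\varphi\le\Phi$, one computes $\Phi'(t)=L\varphi(t)\le L\Phi(t)$, whence $\bigl(e^{-L(t-t_0)}\Phi(t)\bigr)'\le 0$. Integrating from $t_0$ and using $\Phi(t_0)=|x_0-y_0|$ gives $\Phi(t)\le|x_0-y_0|e^{L(t-t_0)}$, and therefore $\varphi(t)\le|x_0-y_0|e^{L(t-t_0)}$ for all $t\ge t_0$ in the common interval of existence.

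Third, for $t\le t_0$ I would run the same computation after the time reversal $s\mapsto 2t_0-s$: the reversed functions $\tilde x(s)\doteqdot x(2t_0-s)$, $\tilde y(s)\doteqdot y(2t_0-s)$ solve IVPs for the reflected field $\tilde f(s,z)\doteqdot -f(2t_0-s,z)$, which has the same Lipschitz constant $L$, so the forward estimate already established applies and produces the bound with $t-t_0$ replaced by $t_0-t$. Combining the two cases gives the uniform inequality $\varphi(t)\le|x_0-y_0|e^{L|t-t_0|}$.

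The argument is essentially routine, so there is no serious obstacle; the only points requiring care are bookkeeping ones. The function $\varphi=|x-y|$ need not be differentiable at instants where $x(s)=y(s)$, which is exactly why I prefer the integral formulation above to a direct differential inequality for $\varphi$ (alternatively one can work with the smooth quantity $|x-y|^2$ and the Cauchy--Schwarz inequality). One should also note that $L$ is being used as a global Lipschitz constant valid on the region swept out by the two solutions; if only local Lipschitz continuity is at hand, one first restricts to a compact subinterval on which both solutions stay in a fixed compact subset of $U$ and uses the local constant there. Finally, taking $x_0=y_0$ forces $\varphi\equiv 0$, which recovers the uniqueness half of Theorem \ref{existence} as a byproduct.
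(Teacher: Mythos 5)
Your proof is correct: the reduction to the integral equation, the Gronwall closure via $\Phi(t)=|x_0-y_0|+L\int_{t_0}^t\varphi(s)\,ds$, and the time-reversal argument for $t\le t_0$ are all sound, and your remark about the possible non-differentiability of $|x-y|$ is exactly the right reason to work with the integral form. The paper does not prove this statement itself but cites it as a special case of \cite[Theorem 2.8]{ODE}, and your argument is precisely the standard Gronwall proof given there, so the approaches coincide.
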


We also state a basic lemma which will be used to determine whether a given ODE blows up or not.

\begin{lemma} \label{ODElemma}
Consider the  problem
\[
\begin{split}
x'={}&x^\theta\\
x(t_0)={}&x_0>0\,.
\end{split}
\]
Its solution is
\begin{equation*}
x(t)=\left\{\begin{aligned}x_0+\log\left(\frac{t}{t_0}\right){}& \;\;\text{when}\;\; \theta=1\\
(x_0^{1-\theta}+(1-\theta)(t-t_0))^{\frac{1}{1-\theta}}{}&\;\;\text{when}\;\; \theta \neq 1\,.
\end{aligned}\right.
\end{equation*}
The solution exists for all $t>t_0$ if $\theta \leq 1$. It tends to infinity at $t=t_0+x_0/(\theta-1)$ if $\theta >1$.
\end{lemma}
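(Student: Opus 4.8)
The plan is to integrate this separable autonomous equation explicitly and then read the interval of existence off the resulting closed form. Since $x\mapsto x^\theta$ is smooth, and in particular locally Lipschitz, on the open half-line $\{x>0\}$, Theorem \ref{existence} provides a unique solution on a maximal interval about $t_0$ for as long as $x$ remains positive. Because $x'=x^\theta>0$ wherever $x>0$, the solution is increasing in forward time and hence stays $\ge x_0>0$, so uniqueness is never threatened by the boundary point $x=0$. It therefore suffices to produce \emph{one} function satisfying the differential equation and the initial condition; uniqueness then forces it to be \emph{the} solution.

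First I would separate variables as $x^{-\theta}\,dx=dt$ and integrate from $t_0$ to $t$. The antiderivative of $x^{-\theta}$ behaves differently in the two regimes: for $\theta=1$ it is $\log x$, whereas for $\theta\neq1$ it is $x^{1-\theta}/(1-\theta)$. Using $x(t_0)=x_0$ to fix the constant and solving for $x(t)$ yields the logarithmic/exponential form in the borderline case and the power form $\bigl(x_0^{1-\theta}+(1-\theta)(t-t_0)\bigr)^{1/(1-\theta)}$ otherwise. A direct differentiation confirms that each candidate satisfies $x'=x^\theta$ together with the initial condition.

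The only step carrying genuine content is the analysis of the maximal interval, which is precisely the feature the lemma is designed to detect. Writing $B(t)\doteqdot x_0^{1-\theta}+(1-\theta)(t-t_0)$ for the base of the power, I would argue as follows. When $\theta\le1$ the coefficient $1-\theta\ge0$, so $B$ is nondecreasing and stays positive for every $t>t_0$; hence $x(t)$ is finite for all such $t$ (for $\theta=1$ the exponential form makes existence for all $t>t_0$ immediate). When $\theta>1$ the coefficient $1-\theta<0$, so $B$ strictly decreases and vanishes at the finite time $t_\ast=t_0+x_0^{1-\theta}/(\theta-1)$; since the exponent $1/(1-\theta)$ is negative, $x(t)=B(t)^{1/(1-\theta)}\to\infty$ as $t\uparrow t_\ast$, which is the asserted blow-up.

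I do not anticipate a real obstacle here: this is a textbook separable ODE, and the work is essentially bookkeeping. The one point to keep straight is the split between $\theta=1$ and $\theta\neq1$; a convenient consistency check is that the power form $\bigl(x_0^{1-\theta}+(1-\theta)(t-t_0)\bigr)^{1/(1-\theta)}$ converges to the exponential $x_0\,e^{\,t-t_0}$ as $\theta\to1$, confirming that the two cases glue together correctly.
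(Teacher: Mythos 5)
Your proposal is correct and complete: the paper in fact states this lemma without any proof at all (it is invoked as a textbook fact about separable equations), so your separation-of-variables derivation, together with the local Lipschitz/uniqueness remark on $\{x>0\}$ and the forward-in-time positivity argument, supplies everything the paper omits. More interestingly, your computation exposes two typos in the paper's \emph{statement} of the lemma. First, for $\theta=1$ the printed formula $x(t)=x_0+\log(t/t_0)$ solves $x'=1/t$, not $x'=x$; the correct solution is $x(t)=x_0\,\mathrm{e}^{\,t-t_0}$, which is exactly what your consistency check recovers as the $\theta\to1$ limit of the power form. Second, setting your base $B(t)=x_0^{1-\theta}+(1-\theta)(t-t_0)$ equal to zero gives blow-up at $t_\ast=t_0+x_0^{1-\theta}/(\theta-1)$, whereas the paper prints $t_0+x_0/(\theta-1)$, which is wrong unless $x_0=1$ (e.g.\ for $\theta=2$ the true blow-up time is $t_0+1/x_0$). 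Neither typo propagates into the rest of the paper, since the lemma is only ever used qualitatively through comparison arguments — global existence when $\theta\le1$ versus finite-time blow-up when $\theta>1$ — and those dichotomies your proof establishes correctly.
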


\subsection{A PDE regularity lemma}
The following result provides higher regularity of $C^2$-solutions to elliptic PDE. It is a consequence of Schauder's estimate (see {\cite[Lemma 17.16]{GilbargTrudinger}}).

\begin{proposition}\label{smoothnessthm}
Suppose that $u \in C^2(\Omega)$ satisfies
\[
F(\cdot,u,Du,D^2u)=0 \;\; \text{in}\;\; \Omega\,,
\]
where $F:\Gamma\subset\Omega\times\R\times \R^n\times S^{n\times n}\to\R$ is monotone increasing with respect to the matrix variable. If $F\in C^{k,\alpha}(\Gamma)$ for some $k\ge 1$ and $0<\alpha<1$, then $u \in C^{k+2,\alpha}(\Omega)$. In particular, if $F$ is smooth, then so is $u$.
\end{proposition}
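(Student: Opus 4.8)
The plan is to reduce the fully nonlinear equation to interior Schauder theory for a linear elliptic operator obtained by a difference-quotient linearization, and then to bootstrap. Throughout, write the generic point of $\Gamma$ as $(x,s,p,r)$ with $s\in\R$, $p\in\R^n$, $r\in S^{n\times n}$, and fix a subdomain $\Omega'\Subset\Omega$. Since $u\in C^2(\Omega)$, the arguments $(x,u(x),Du(x),D^2u(x))$ range over a compact set $K\subset\Gamma$ as $x$ ranges over $\overline{\Omega'}$. On $K$ the derivatives $F_{r_{ij}}$ are bounded, and monotonicity of $F$ in the matrix variable makes $[F_{r_{ij}}]$ positive semidefinite; I will use that it is in fact positive definite (as holds in our application, where $\partial f/\partial z_i>0$), so that the equation is uniformly elliptic on $K$.

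First I would linearize by difference quotients. For a unit vector $e$ and small $h$, the translate $u_h(x):=u(x+he)$ solves $F(x+he,u_h,Du_h,D^2u_h)=0$. Subtracting this from $F(\cdot,u,Du,D^2u)=0$ and integrating along the segment joining the two sets of arguments shows that $v_h:=(u_h-u)/h$ solves a linear equation
\[
a^h_{ij}\,D_{ij}v_h+b^h_i\,D_iv_h+c^h v_h=g^h\qquad\text{in }\Omega',
\]
whose coefficients are averages of $F_{r_{ij}},F_{p_i},F_s$ over that segment and whose right-hand side $g^h$ is the difference quotient of the explicit $x$-dependence of $F$. The operators are uniformly elliptic with coefficients continuous and bounded uniformly in $h$, and since $F\in C^{1,\alpha}$ the data $g^h$ are bounded in $L^\infty$ uniformly in $h$.

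The main obstacle is the initial upgrade from $C^2$ to $C^{2,\alpha}$: with $u$ only $C^2$, the coefficients $a^h_{ij}$ are merely continuous, which is too weak for a direct Schauder estimate. To get past this I would invoke the interior $L^p$ (Calder\'on--Zygmund) estimate for operators with continuous coefficients, bounding $\|v_h\|_{W^{2,p}(\Omega'')}$ uniformly in $h$ for every finite $p$ and every $\Omega''\Subset\Omega'$. Letting $h\to0$ gives $D_e u\in W^{2,p}_{\mathrm{loc}}$ for all $p$, hence $u\in W^{3,p}_{\mathrm{loc}}$, and Sobolev embedding yields $u\in C^{2,\beta}(\Omega'')$ for every $\beta<1$. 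Once $u\in C^{2,\beta}$ the coefficients $a_{ij}=F_{r_{ij}}(\cdot,u,Du,D^2u)$ are genuinely H\"older continuous and I may differentiate the equation legitimately: $w=D_e u$ satisfies a uniformly elliptic linear equation with H\"older coefficients and data, so the interior Schauder estimate gives $w\in C^{2,\alpha\beta}_{\mathrm{loc}}$, i.e.\ $u\in C^{3,\alpha\beta}$. A second application, now with $D^2u$ Lipschitz so that the coefficients lie in $C^{0,\alpha}$, recovers the sharp exponent and gives $u\in C^{3,\alpha}$, settling the case $k=1$.

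Finally I would bootstrap by induction on $k$. Assuming the statement with $k$ replaced by $k-1$, the hypothesis $F\in C^{k,\alpha}\subset C^{k-1,\alpha}$ gives $u\in C^{k+1,\alpha}$; then the coefficients and right-hand side of the linear equation satisfied by each first derivative $w=D_e u$ lie in $C^{k-1,\alpha}$, and the higher-order interior Schauder estimate yields $w\in C^{k+1,\alpha}$, that is, $u\in C^{k+2,\alpha}$. Letting $k\to\infty$ gives smoothness of $u$ when $F$ is smooth. This is exactly the mechanism underlying \cite[Lemma 17.16]{GilbargTrudinger}, to which I would ultimately appeal for the precise estimates.
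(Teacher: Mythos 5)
Your proposal is correct and takes essentially the same route as the paper, which gives no independent argument but deduces the proposition from Schauder theory via \cite[Lemma 17.16]{GilbargTrudinger} --- your difference-quotient linearization, the interior $W^{2,p}$ step to break the $C^2\to C^{2,\beta}$ barrier, and the subsequent Schauder bootstrap in $k$ are precisely the mechanism behind that lemma. You are also right to read ``monotone increasing in the matrix variable'' as strict positivity of $\bigl(\partial F/\partial r_{ij}\bigr)$, i.e.\ uniform ellipticity on compact subsets of $\Gamma$, which is what the paper's application supplies via $\partial f/\partial z_i>0$.
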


\subsection{The rotational translator ODE} \label{sec:TODE}

For a real-valued $C^2-$function $u$ of a single real variable, consider the graph of $y=u(|x|)$, where $x \in \R^n$. Its unit normal at a point $(x,u(|x|))$ is given by
\[
    \Vec{N}=\left(\frac{u'}{\sqrt{1+u'^2}}\frac{x}{|x|},\frac{-1}{\sqrt{1+u'^2}}\right) \in \R^n \times \R
\]
The principal curvatures of this hypersurface are $\kappa_1$, the curvature of the profile curve $y=u(r)$, and $\kappa_i$ $(i=2,...,n)$, the rotational curvatures, which are all the same. They are given by the equations
\begin{align*}
    \kappa_1&=\frac{u''}{(1+u'^2)^{3/2}}\\ \kappa_i&=\frac{u'}{r\sqrt{1+u'^2}}
\end{align*}
for $i=2,...,n$.

For the flow $\frac{\partial \Vec{X}}{\partial t }=-f \Vec{N}$, the equation for the standard translator with unit speed in the coordinate direction $e_{n+1}\doteqdot (0,...,0,1)\in \R^{n+1}$ is given by $\langle \Vec{N},e_{n+1}\rangle=-f$, (where $f$ is a function of principal curvatures at a point), from which one obtains
\begin{equation} \label{TODE}
    f\left(\frac{u''}{(1+u'^2)^{3/2}},\frac{u'}{r\sqrt{1+u'^2}},...,\frac{u'}{r\sqrt{1+u'^2}}\right)=\frac{1}{\sqrt{1+u'^2}}\,.
\end{equation}
We reduce the order and simplify notation by setting $v\doteqdot u'$ and $\mathbf{e}\doteqdot(1,...,1) \in \mathbb{R}^n$, so that
\begin{equation*}
    f\left(\frac{v'}{(1+v^2)^{3/2}},\frac{v}{r\sqrt{1+v^2}}\mathbf{e}\right)=\frac{1}{\sqrt{1+v^2}}\,.
\end{equation*}
The function $v$ has the geometric significance of being the gradient of the profile curve.

Now let $f$ be an admissible speed. Using the $\alpha$-homogeneity of $f$, one gets
\begin{equation*}
    f\left(\frac{v'}{(1+v^2)^{3/2-1/(2\alpha)}},\frac{v}{r(1+v^2)^{1/2-1/(2\alpha)}} \mathbf{e}\right)=1\,.
\end{equation*}

Define $h(x,y)\doteqdot f(x,y\mathbf{e})$. Since $f$ is strictly monotone in each argument, we can solve for $x$ using a unique function $g$, in the sense that $h(g(y,z),y)=z$. In the same way, we can solve for $y$ using a function $g_1$, i.e. $h(x,g_1(x,z))=z$. We remark here that since $f$ is an admissible speed, the implicit function theorem applies and hence $g, g_1$ are of class $C^1$. 

Thus, solving for $v'$ we get
\begin{equation*}
    v'=(1+v^2)^{\frac{3}{2}-\frac{1}{2\alpha}}g\left(\frac{v}{r(1+v^2)^{\frac{1}{2}-\frac{1}{2\alpha}}},1\right)\,.
\end{equation*}
We now set $\beta =\frac{1}{2}-\frac{1}{2\alpha}$, so that
\begin{equation}\label{eqn:alphaode}
    v'=(1+v^2)^{1+\beta}g\left(\frac{v}{r(1+v^2)^\beta},1\right)\,.
\end{equation}
We shall refer to this equation as the \emph{Translator ODE}. Note that since we seek bowl-type solitons, we have the initial condition $v(0)=u'(0)=0$. The value of $u(0)$ is arbitrary because the ODE is independent of $u$, but we can set $u(0)=0$ for convenience, so that our translator has its ``tip'' at the origin. One recovers $u$ from $v$ via the formula
\begin{equation} \label{uformula}
    u(r)= \int_0^r v(\rho) d \rho\,.
\end{equation}

\subsubsection{Level sets of the expression $\frac{w}{r(1+w^2)^\beta}$} \label{levset}

Let $m>0$ be a constant, and let $\frac{w}{m(1+w^2)^\beta}=r$.
Then
\begin{equation}\label{dr/dw}
    \frac{dr}{dw}=\frac{1}{m}\frac{1+(1-2\beta)w^2}{(1+w^2)^{1+\beta}}
\end{equation}
which means $r$ behaves very differently for $\beta>1/2$ and $\beta<1/2$. 
When $\beta>1/2$, $r$ is increasing for small values of $w$ and decreasing for large values, which means $r$ is not invertible. But if $\beta<1/2$, then  $dr/dw>0$, meaning $r$ is is increasing with respect to $w$ and hence, so is the inverse function which defines $w$ as a function of $r$. Figure \ref{fig:beta} illustrates this contrast. We consider only the latter case, since this corresponds to $\alpha>0$.

\begin{figure} 
\centering
\begin{tikzpicture}
\begin{axis}[
    axis lines = left,
    xlabel = \(v\),
    ylabel = {\(r\)},
    xticklabels=\empty,
    yticklabels=\empty
]
\addplot [
    domain=0:5, 
    samples=100, 
    color=red,
]
{x/(1+x^2)};
\addlegendentry{\(\beta>1/2\)}

\addplot [
    domain=0:5, 
    samples=100, 
    color=blue,
    ]
    {x/(1+x^2)^(1/4)};
\addlegendentry{\(0<\beta<1/2\)}
\addplot [
    domain=0:5, 
    samples=100, 
    color=green,
    ]
    {x/(1+x^2)^(-1/4)};
\addlegendentry{\(\beta<0\)}
\end{axis}
\end{tikzpicture}
\caption{The graph of $\frac{w}{m(1+w^2)^\beta}=r$}
\label{fig:beta}
\end{figure}
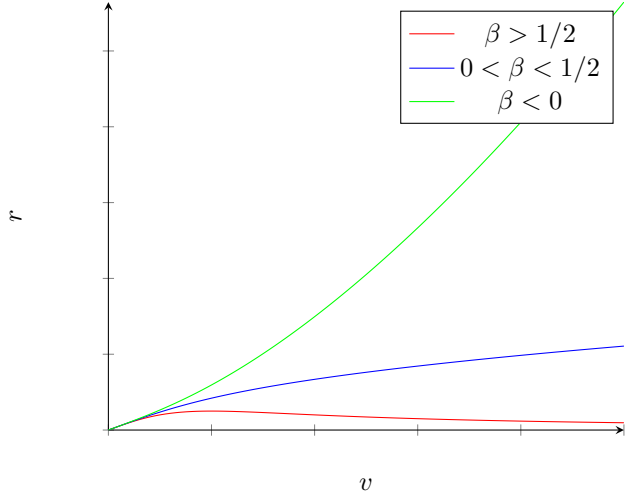

To make the notion of asymptotics precise, suppose $f,g$ are real valued functions of a real variable $t$. We say $f(t) \asymp g(t)$ as $t \to \infty$ if $f(t)/g(t) \to C$ for some constant $C \neq 0$. Then in our level set equation, $r(w) \asymp \frac{w^{1-2\beta}}{m}=\frac{w^{1/\alpha}}{m}$. Therefore, 
\begin{equation}
    w(r) \asymp (mr)^\alpha
\end{equation} 
And by equation (\ref{dr/dw}),
\[
\frac{dw}{dr}(r) \asymp m^\alpha w^{2\beta} \asymp m^\alpha r^{2\alpha \beta}=m^\alpha r^{\alpha-1}
\]
so that
\begin{equation} \label{dw/dr}
    \frac{dw}{dr} \asymp r^{\alpha-1}\,.
\end{equation}

For subsequent reference, we collect the following facts about the relation
\begin{equation*}
    \frac{w}{m(1+w^2)^\beta}=r.
\end{equation*}
The hypothesis $\beta<1/2$ is used for these lemmas.

\begin{lemma}\label{lem:level set invertibility}
    For constant $m>0$, $r$ is increasing with respect to $w$.
\end{lemma}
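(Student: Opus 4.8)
The plan is to read the result off directly from the derivative computation \eqref{dr/dw}, which is already established above and which reduces the entire lemma to a sign analysis. Differentiating the defining relation $\frac{w}{m(1+w^2)^\beta}=r$ with respect to $w$ gives
\[
\frac{dr}{dw}=\frac{1}{m}\frac{1+(1-2\beta)w^2}{(1+w^2)^{1+\beta}}\,,
\]
so the whole task is to determine the sign of the right-hand side. First I would observe that the denominator $(1+w^2)^{1+\beta}$ is strictly positive for every $w$, regardless of the sign of the exponent $1+\beta$, and that $m>0$ by hypothesis; hence the sign of $\frac{dr}{dw}$ is controlled entirely by the numerator $1+(1-2\beta)w^2$.

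Next I would invoke the standing hypothesis $\beta<1/2$, which is precisely the assumption flagged as being used for these lemmas. This gives $1-2\beta>0$, so the coefficient of $w^2$ in the numerator is positive, whence $1+(1-2\beta)w^2\ge 1>0$ for all $w$, with strict positivity even at $w=0$. Combining these observations yields $\frac{dr}{dw}>0$ throughout the domain, which is exactly the assertion that $r$ is strictly increasing in $w$. The only remaining formality, if one wished to be fully self-contained, is the elementary fact that a continuously differentiable function with everywhere-positive derivative is strictly increasing.

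I do not expect a genuine obstacle here; the single point requiring attention is that the positivity of the numerator must hold \emph{uniformly} across the whole range of $w$, and this is exactly what breaks once $\beta>1/2$, where $1-2\beta<0$ forces the numerator to change sign and $r$ ceases to be monotone. This is the dichotomy already illustrated in Figure \ref{fig:beta}, and it confirms that the hypothesis $\beta<1/2$ is not merely convenient but necessary for the conclusion.
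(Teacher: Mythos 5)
Your proposal is correct and follows exactly the route the paper takes: the paper establishes the lemma via the derivative formula \eqref{dr/dw} and the observation, made just before the lemma is stated, that $\beta<1/2$ forces $1-2\beta>0$ and hence $dr/dw>0$. Your additional remarks (positivity of the denominator irrespective of the sign of $1+\beta$, and the failure of monotonicity when $\beta>1/2$) are accurate and match the paper's surrounding discussion and Figure \ref{fig:beta}.
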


\begin{lemma}\label{lem:level set invertibility epsilon}
    For a fixed $r>0$, if we regard $m$ as a function of $w$, it is increasing with respect to $w$.
\end{lemma}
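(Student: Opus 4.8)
The plan is to solve the defining relation for $m$ explicitly and differentiate, just as was done for $r$ in equation (\ref{dr/dw}). Since $\frac{w}{m(1+w^2)^\beta}=r$ with $r>0$ fixed, I can write $m$ directly as a function of $w$, namely
\[
m(w)=\frac{w}{r(1+w^2)^\beta}\,.
\]
This is valid because the relation is linear in $m$, so no implicit-function argument is needed; $m$ is a genuine elementary function of $w$ on $w>0$.

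Next I would differentiate. Treating $r$ as a constant and applying the product rule to $w\,(1+w^2)^{-\beta}$, I expect to obtain, after factoring out $(1+w^2)^{-\beta-1}$,
\[
\frac{dm}{dw}=\frac{1}{r}\,\frac{1+(1-2\beta)w^2}{(1+w^2)^{1+\beta}}\,.
\]
This is the same numerator $1+(1-2\beta)w^2$ that already appears in (\ref{dr/dw}); the two computations are formally identical with the roles of the constant ($m$ versus $r$) interchanged, which is exactly what one should expect since the relation is symmetric under swapping $m$ and $1/r$.

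Finally I would read off the sign. The factor $(1+w^2)^{1+\beta}$ is positive and $r>0$, so the sign of $dm/dw$ is governed entirely by $1+(1-2\beta)w^2$. Invoking the standing hypothesis $\beta<1/2$ gives $1-2\beta>0$, whence $1+(1-2\beta)w^2>0$ for every $w$, and therefore $dm/dw>0$. Thus $m$ is strictly increasing in $w$, as claimed.

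I do not anticipate a genuine obstacle here: the statement is a one-line differentiation whose positivity hinges solely on the hypothesis $\beta<1/2$, mirroring Lemma \ref{lem:level set invertibility}. The only point requiring a word of care is emphasizing that, unlike the situation for $g$ and $g_1$ earlier, solving for $m$ requires no inversion beyond dividing by the explicit factor $r(1+w^2)^\beta$, so the computation is completely elementary.
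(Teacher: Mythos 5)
Your proof is correct and is essentially the paper's argument: the paper disposes of the lemma in one line by citing the symmetry of $r$ and $m$ in the relation $mr=\frac{w}{(1+w^2)^\beta}$, which is precisely the observation you make after carrying out the (formally identical) differentiation $\frac{dm}{dw}=\frac{1}{r}\,\frac{1+(1-2\beta)w^2}{(1+w^2)^{1+\beta}}>0$ for $\beta<1/2$. One cosmetic slip: the relation is symmetric under swapping $m$ and $r$ themselves, not $m$ and $1/r$, but this does not affect your computation or its conclusion.
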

\begin{proof}
This is due to the symmetry of $r$ and $m$.
\end{proof}

\subsubsection{The domain of $g(\cdot,1)$}

Note that the values of a homogeneous function $f$ of $n$ variables are completely determined by the values of $f$ on $S^{n-1}$. Thus, our admissible speeds are completely determined in the positive cone $\Gamma^n_+$ by their values on $S^{n-1} \cap \Gamma_+^n$, the points on the unit sphere that have all positive coordinates. Further, since in our setting all inputs of $f$ but the first are equal, it suffices to consider points in $S^{n-1} \cap \Gamma^n_+$ of the form $(x,y\e)$, that is, $(x,y)\in \Gamma_+^2$ and $x^2+(n-1)y^2=1$. So we may describe $y$ as a function of $x$: $y(x)=\sqrt{\frac{1-x^2}{n-1}}$. Now we can describe the level set $f(x,y\e)=1$ as
\[
E=\left\{\frac{1}{f(x,y(x)\e)^{1/\alpha}}\left(x,y(x)\right): 0<x<1\right\}\, \subset \Gamma^2_+.
\]
From this point of view, we see that $E$ is a connected set, as it is the continuous image of an interval. From another point of view, $E$ can be identified with the graph of the function $g(\cdot,1)$: it is the set of points $(g(y,1),y)$ such that $y$ is in the domain $D\subset \R$ of $g(\cdot,1)$. $D$ is the projection of $E$ onto the $y$-coordinate, and hence it is connected. Now, due to the implicit function theorem, the domain of $g(\cdot,1)$ is an open interval, as we already know it is connected.
Since $\lim_{x \to 0} y(x)=1/\sqrt{n-1}$, we see that, in the degenerate case, $D$ is unbounded above. In the nondegenerate case, we see that $D$ is bounded above; here we find that $\sup D= 1/f(0,\e)^{1/\alpha}$, and we can extend $g$ continuously by defining $g(\sup D,1)=0$.

\subsection{Three examples}
We will illustrate the theorems with three examples, namely, flows by the harmonic mean curvature, scalar curvature, and powers of the Gauss curvature. These are defined, respectively, as $(\Sigma_{i=1}^n\kappa_i^{-1})^{-1}$, $\sqrt{\Sigma_{i<j} 2 \kappa_i \kappa_j}$, and $K^{\frac{\alpha}{n}}$ where $K=\Pi_1^n \kappa_i$ is the Gauss curvature. The former two are both 1-homogeneous functions, while the Gauss curvature flows can take any homogeneity $\alpha>0$. We note here that we are using a fact that will be proved in the next section, which is that (\ref{eqn:alphaode}) has a unique solution on some maximal interval $[0,R), 0<R\leq \infty$. These examples illustrate the motivation behind the formulation of all theorems of this paper.

\subsubsection{Harmonic mean curvature flow} 

Let us first study the harmonic mean curvature flow for $n \geq 2$. One finds that (\ref{eqn:alphaode}) becomes
\begin{equation}\label{hmcfv}
    v'=\frac{v}{v-(n-1)r}(1+v^2)\,.
\end{equation}

Observe that the function $v_-=nr$ is an ODE subsolution due to the following calculation:
\[
    v_-'=n \leq \frac{nr}{nr-(n-1)r}(1+n^2r^2)\,.
\]
Now, since the solution $v$ satisfies $v(0)=v_-(0)=0$, we have $v \geq v_-=nr$ for as long as $v$ exists. We then deduce that
\begin{align*}
     1 \leq \frac{v}{v-(n-1)r}&\leq n\,.
\end{align*}
Therefore
\begin{align*}
    &1      \leq \frac{v'}{1+v^2} \leq n \\
    \implies &r      \leq\tan^{-1}v        \leq nr \\
    \implies &\tan(r) \leq v                \leq \tan(nr)\,.
\end{align*}

Using (\ref{uformula}) to recover $u$, we see that the bowl-type soliton $y=u(|x|)$ is defined on a ball $B_R$ where $R\in [\frac{\pi}{2n},\frac{\pi}{2}]$. We also see it is asymptotic to the cylinder $\partial B_R \times \mathbb{R}$ since $v,v' \uparrow \infty$ as $r \uparrow R$, and therefore so do $u,u'$ as $|x| \uparrow R$.

\subsubsection{Scalar curvature flow}
When $n \geq 3$, the ODE corresponding to the scalar curvature flow is
\begin{equation}
    v'=(1+v^2)\frac{1-(n-1)(n-2)(v/r)^2}{2(n-1)v/r} \doteqdot (1+v^2)\phi(v/r).
\end{equation}
We claim that the solution is entire. Indeed,
define $v_+ \doteqdot \frac{r}{\sqrt{(n-1)(n-2)}}$. Then $v_+'=\frac{1}{\sqrt{(n-1)(n-2)}} >0=(1+v_+^2)\phi(v_+/r)$, so that $v_+$ is a supersolution. By a similar calculation, $v_-\doteqdot \frac{r}{\sqrt{n(n-1)}}$ is a subsolution. Both these are defined for all $r\geq0$. Therefore by Theorem \ref{extensibility}, $v$ is defined for $r \geq 0$. Recovering $u$ using (\ref{uformula}), we see that the bowl-type soliton is entire.

\subsubsection{Gauss curvature flows}
The examples we provide here have a speed function of the form $K^{\frac{\alpha}{n}}$, where $K$ is the Gauss curvature, i.e. $f(x_1,...,x_n)=(x_1 \cdot ... \cdot x_n)^{\frac{\alpha}{n}}$, which is an $\alpha$-homogeneous function. Then
\begin{align*}
    &f(x,y \mathbf{e})=z\\
    \iff &(xy^{n-1})^{\alpha/n}=z\\
    \iff &x=\frac{z^{\frac{n}{\alpha}}}{y^{n-1}}\,.
\end{align*}

Thus the resulting ODE is of the form
\begin{equation}
    v'=\frac{r^{n-1}(1+v^2)^{(n+2)/2-1/(2{\alpha/n})}}{v^{n-1}}
\end{equation}

This equation is separable, and by comparing with Lemma \ref{ODElemma}, one sees that $v$ blows up at $r=R$ for some finite $R$ precisely when ${\alpha/n} > 1/2$, and $v$ exists for all $r$ when ${\alpha/n} \leq 1/2$. Thus rotational translators of $K^{\alpha/n}$ flows exists over bounded domains and are asymptotic to a cylinder precisely when $\alpha >n/2$ and are entire when $\alpha \leq n/2$.

\section{Existence and uniqueness}

Our aim is to show that there exists a unique solution to the problem
\begin{equation} \label{IVP}
\begin{split}
v'={}&(1+v^2)^{1+\beta}g\left(\frac{v}{r(1+v^2)^\beta},1\right)\,,\\
v(0)={}&0
\end{split}
\end{equation}
on a maximal interval $[0,R)$, $0<R\le\infty$, which is of class $C^1([0,R))$. Note that this is non-trivial since the problem is singular at $r=0$.

We shall obtain a solution to \eqref{IVP} as the limit of a sequence of solutions $v_n$ to the approximating problems
\begin{equation*}
\begin{split}
v_n'={}&(1+v_n^2)^{1+\beta}g\left(\frac{v_n}{r(1+v_n^2)^\beta},1\right)\,,\\
v_n(r_n)={}&a_n
\end{split}
\end{equation*}
with initial values  $(r_n,a_n)\to (0,0)$.

Our approach is to solve the equation near the origin on some small interval $[0,\delta]$ where $\delta$ will be determined later. We first obtain a subsolution and some supersolutions to the ODE which will serve as uniform lower and upper barriers on $[0,\delta]$. Note that if $v$ satisfies the initial condition and admits a (one-sided) derivative at $r=0$, then its derivative must satisfy $v'(0)=1/f(1,...,1)^{1/\alpha}$. We verify  this by allowing $r \to 0$ in (\ref{IVP}) and observe the following:
\begin{align*}
    & v'(0)=g(v'(0),1) \\
\iff & f(v'(0),v'(0)\mathbf{e})=1 \\
\iff & v'(0)^\alpha f(1,...,1)=1 \\
\iff & v'(0)=1/f(1,...,1)^{1/\alpha}\,.
\end{align*}
So we define $\gamma \doteqdot 1/f(1,...,1)^{1/\alpha} $. Then $\gamma$ is the unique solution to $\gamma =g(\gamma,1)$ due to the above calculation.

Now define a function $w(r)$ implicitly by the relation
\begin{equation} \label{levelseteqn}
    \frac{w}{r(1+w^2)^\beta}=\gamma\,.
\end{equation}
Note that $w$ is well-defined by Lemma \ref{lem:level set invertibility}.

\begin{proposition} \label{subsol}
   The function $w$ as defined in \eqref{levelseteqn} is a subsolution to \eqref{IVP}.
\end{proposition}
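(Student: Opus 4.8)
The plan is to exploit the fact that the function $w$ was defined precisely so that the argument of $g$ appearing in the translator ODE is \emph{constant} along the graph $r \mapsto (r, w(r))$. Indeed, the defining relation \eqref{levelseteqn} reads $\frac{w}{r(1+w^2)^\beta} = \gamma$ for every $r > 0$, so that
\[
g\left(\frac{w}{r(1+w^2)^\beta}, 1\right) = g(\gamma, 1) = \gamma\,,
\]
where the last equality is the identity $\gamma = g(\gamma, 1)$ recorded just before the proposition. Consequently the right-hand side of \eqref{IVP}, evaluated along $w$, collapses to $(1+w^2)^{1+\beta}\gamma$, and the subsolution condition $w' < (1+w^2)^{1+\beta} g\left(\frac{w}{r(1+w^2)^\beta},1\right)$ that I must establish reduces to the clean inequality $w'(r) < (1+w^2)^{1+\beta}\gamma$.

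To close the argument I would compute $w'$ explicitly. Rewriting \eqref{levelseteqn} as $\frac{w}{\gamma(1+w^2)^\beta} = r$, this is exactly the level-set relation studied in \S\ref{levset} with the constant $m = \gamma$; hence formula \eqref{dr/dw} gives $\frac{dr}{dw} = \frac{1}{\gamma}\frac{1+(1-2\beta)w^2}{(1+w^2)^{1+\beta}}$, and inverting,
\[
w'(r) = \gamma\,\frac{(1+w^2)^{1+\beta}}{1+(1-2\beta)w^2}\,.
\]
Substituting this into the reduced inequality and cancelling the strictly positive factor $\gamma(1+w^2)^{1+\beta}$, the claim becomes simply $\frac{1}{1+(1-2\beta)w^2} < 1$, i.e. $(1-2\beta)w^2 > 0$. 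Since we are in the standing regime $\beta < 1/2$ (equivalently $\alpha > 0$) and $w(r) > 0$ for $r > 0$ by Lemma \ref{lem:level set invertibility}, this holds strictly, giving the subsolution property.

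The one point requiring a word of care is the behaviour at the tip $r = 0$: there $w = 0$, the ODE is itself singular, and the inequality degenerates to the equality $w'(0) = \gamma$. Strictness therefore holds only on $(0, \delta]$. This is not a defect but exactly the desired feature: $w$ shares the one-sided derivative $\gamma$ with the sought solution at the origin, so it pins the correct initial slope, while the strict inequality on $(0, \delta]$ is all that the comparison lemma needs when comparing against the approximating solutions $v_n$ (which start from interior points $(r_n, a_n)$ with $r_n > 0$). No estimates beyond this are needed, since the entire mechanism is that $w$ traces a level curve of the $g$-argument, trivialising the nonlinear speed term.
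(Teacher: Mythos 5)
Your proof is correct and follows essentially the same route as the paper: both compute $w'=\gamma\frac{(1+w^2)^{1+\beta}}{1+(1-2\beta)w^2}$ from \eqref{dr/dw}, use the identity $\gamma=g(\gamma,1)$ together with the fact that the level-set relation \eqref{levelseteqn} makes the argument of $g$ constant, and conclude from $1+(1-2\beta)w^2\ge 1$ (valid since $\beta<1/2$). Your extra observation---that the inequality is strict for $r>0$ but degenerates to equality at the origin, which is why the paper's version records only ``$\le$'' and invokes the weak-inequality form of the comparison lemma---is accurate and a fine point of added care, but not a departure from the paper's argument.
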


\begin{proof}
    Due to (\ref{dr/dw}),
    \begin{equation}
    \begin{split}
        w' &= \gamma \frac{(1+w^2)^{1+\beta}}{1+(1-2\beta)w^2}\\
            &=g(\gamma,1)\frac{(1+w^2)^{1+\beta}}{1+(1-2\beta)w^2} \\
            &\leq (1+w^2)^{1+\beta}g(\gamma,1)\\
            &=(1+w^2)^{1+\beta}g\left(\frac{w}{r(1+w^2)^\beta},1\right)\,.\qedhere
    \end{split}
\end{equation}
\end{proof}

Given that the equation $f(x,\gamma\mathbf{e})=1$ has a solution $x=\gamma$, the implicit function theorem now guarantees that the equation $f(x,y\mathbf{e})=1$ can be solved for $x$ when $y \in [\gamma e^{-\theta}
,\gamma e^\theta]$, for some $\theta>0$. In other words, $ [\gamma e^{-\theta},
\gamma e^\theta]$ is in the domain of $g(\cdot,1)$. Given $\epsilon \in [0,\theta]$, let $\gamma_\epsilon= \gamma e^\epsilon$, and define $w_\epsilon(r)$ 
using  $\frac{w_\epsilon}{r(1+w_\epsilon^2)^\beta}=\gamma_\epsilon$.

\begin{proposition}
    For each $\epsilon>0$, there exists $r_\epsilon>0$ such that $w_\epsilon$ is a supersolution to (\ref{IVP}) on $(0,r_\epsilon)$.
\end{proposition}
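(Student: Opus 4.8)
The plan is to mirror the computation in Proposition \ref{subsol}, but to exploit the sign $\epsilon>0$ to reverse the relevant inequality. Differentiating the defining relation $\frac{w_\epsilon}{r(1+w_\epsilon^2)^\beta}=\gamma_\epsilon$ via \eqref{dr/dw} (with $m=\gamma_\epsilon$) gives
\[
w_\epsilon'=\gamma_\epsilon\,\frac{(1+w_\epsilon^2)^{1+\beta}}{1+(1-2\beta)w_\epsilon^2}\,,
\]
while the right-hand side of \eqref{IVP} evaluated along $w_\epsilon$ equals $(1+w_\epsilon^2)^{1+\beta}g(\gamma_\epsilon,1)$, precisely because $\frac{w_\epsilon}{r(1+w_\epsilon^2)^\beta}=\gamma_\epsilon$ by construction. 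Dividing through by the common positive factor $(1+w_\epsilon^2)^{1+\beta}$, I see that the supersolution condition $w_\epsilon'>(1+w_\epsilon^2)^{1+\beta}g(\gamma_\epsilon,1)$ is \emph{equivalent} to the scalar inequality
\[
\frac{\gamma_\epsilon}{1+(1-2\beta)w_\epsilon^2}>g(\gamma_\epsilon,1)\,.
\]

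The crux is therefore to establish this scalar inequality for all sufficiently small $r$, and the key observation is that $g(\gamma_\epsilon,1)<\gamma_\epsilon$ whenever $\epsilon>0$. Indeed, set $x\doteqdot g(\gamma_\epsilon,1)$, so $f(x,\gamma_\epsilon\mathbf{e})=1$. If one had $x\ge\gamma_\epsilon$, then $(x,\gamma_\epsilon\mathbf{e})\ge\gamma_\epsilon(1,\dots,1)$ coordinatewise, whence by ellipticity (strict monotonicity) and $\alpha$-homogeneity
\[
f(x,\gamma_\epsilon\mathbf{e})\ge f(\gamma_\epsilon,\dots,\gamma_\epsilon)=\gamma_\epsilon^{\alpha}f(1,\dots,1)=(\gamma_\epsilon/\gamma)^{\alpha}=e^{\alpha\epsilon}>1\,,
\]
contradicting $f(x,\gamma_\epsilon\mathbf{e})=1$. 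Hence $g(\gamma_\epsilon,1)<\gamma_\epsilon$, and this is a strict gap depending only on $\epsilon$, not on $r$.

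It then remains to control the damping factor $1+(1-2\beta)w_\epsilon^2$. From the level-set relation, $\frac{w_\epsilon}{(1+w_\epsilon^2)^\beta}=\gamma_\epsilon r\to 0$ as $r\to 0^+$, so $w_\epsilon(r)\to 0$ and therefore $1+(1-2\beta)w_\epsilon^2\to 1$ (using $\beta<1/2$). Consequently the left-hand side $\frac{\gamma_\epsilon}{1+(1-2\beta)w_\epsilon^2}$ converges to $\gamma_\epsilon$, which strictly exceeds $g(\gamma_\epsilon,1)$ by the previous step. By continuity of $w_\epsilon$ in $r$, there is an $r_\epsilon>0$ for which the scalar inequality—hence the supersolution property—holds throughout $(0,r_\epsilon)$, which is exactly the assertion.

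I expect the only genuinely delicate point to be the strict separation $g(\gamma_\epsilon,1)<\gamma_\epsilon$; everything else is a reduction to a scalar inequality followed by a continuity argument as $r\to 0$. Conceptually, the separation records that $\gamma$ is the \emph{unique} fixed point of $y\mapsto g(y,1)$ and that $g(\cdot,1)$ is decreasing (a direct consequence of ellipticity, since increasing $y$ forces $x$ down to keep $f(x,y\mathbf{e})=1$), so that crossing above $\gamma$ pushes $g(\gamma_\epsilon,1)$ strictly below the diagonal—which is precisely what makes the $\gamma_\epsilon=\gamma e^{\epsilon}$ level curves into barriers from above rather than below.
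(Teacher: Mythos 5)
Your proposal is correct and follows essentially the same route as the paper: both reduce the supersolution condition to the scalar inequality $\frac{\gamma_\epsilon}{1+(1-2\beta)w_\epsilon^2}>g(\gamma_\epsilon,1)$, verify it strictly in the limit $r\to 0$ (where $w_\epsilon\to 0$) using homogeneity and monotonicity of $f$, and conclude by continuity. Your separate derivation of the strict gap $g(\gamma_\epsilon,1)<\gamma_\epsilon$ is just the paper's chain of equivalences $\gamma^\alpha e^{\alpha\epsilon}f(\cdot,\mathbf{e})>1$ evaluated at $w_\epsilon=0$, repackaged as a standalone observation (indeed it also follows directly from $g(\cdot,1)$ being strictly decreasing with fixed point $\gamma$, as you note).
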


\begin{proof}
Note that 
\[
w_\epsilon'= \gamma_\epsilon \frac{(1+w_\epsilon^2)^{1+\beta}}{1+(1-2\beta)w_\epsilon^2}
\]
and
\[
(1+w_\epsilon^2)^{1+\beta} g\left(\frac{w_\epsilon}{r(1+w_\epsilon^2)^\beta},1\right)= (1+w_\epsilon^2)^{1+\beta} g(\gamma_\epsilon,1)\,.
\]
Therefore,
\begin{align*}
    & w_\epsilon' > (1+w_\epsilon^2)^{1+\beta} g\left(\frac{w_\epsilon}{r(1+w_\epsilon^2)^\beta},1\right)\\
    \iff & \gamma_\epsilon \frac{(1+w_\epsilon^2)^{1+\beta}}{1+(1-2\beta)w^2} > (1+w_\epsilon^2)^{1+\beta} g(\gamma_\epsilon,1)\\
    \iff &\frac{\gamma e^{\epsilon}}{1+(1-2\beta)w_\epsilon^2} > g(\gamma e^{\epsilon},1)\\
    \iff &\gamma^\alpha e^{\alpha\epsilon}f\left(\frac{1}{1+(1-2\beta)w_\epsilon^2},\mathbf{e}\right) > 1\\
    \iff &f\left(\frac{1}{1+(1-2\beta)w_\epsilon^2},\mathbf{e}\right) > \frac{f(1,\mathbf{e})}{e^{\alpha\epsilon}}\,.
\end{align*}

The above inequality holds when $r=0$, hence it holds by continuity on some $[0,r_\epsilon]$, where $r_\epsilon$ might depend on $\epsilon$.
\end{proof}

It is important to note that, by Lemma \ref{lem:level set invertibility epsilon}, for any fixed $r$, $w_\epsilon(r)$ is monotone increasing with respect to $\epsilon$. In particular, $w_\theta \geq w_0$.

Now we construct a family of functions that converge to a solution of our ODE on $[0,r_\theta]$. For each $n>1/r_\theta$, consider the continuous function $v_n$ defined as follows.
\begin{itemize}
    \item $\frac{v_n}{r(1+v_n^2)^\beta}=\gamma$ on $[0,1/n]$
    \item $v_n$ obeys the ODE $v_n'=(1+v_n^2)^{1+\beta}g\left(\frac{v_n}{r(1+v_n^2)^\beta},1 \right)$ on $(1/n,r_\theta]$.
\end{itemize}

The initial data for the ODE is of course $v_n(1/n)$, which is implicitly defined by the first relation. Note that $v_n$ is well-defined on the interval $(1/n,r_\theta]$ due to Theorem \ref{extensibility}, where the compact set can be taken to be 
\[
K=\left\{(r,w) \in \mathbb{R}^2: \gamma \leq \frac{w}{r(1+w^2)^\beta} \leq \gamma_1, 0 < r \leq r_\theta\right\} \cup \{(0,0)\}\,.
\]

\begin{center}
    \begin{tikzpicture}
  \begin{axis}
    [
    axis lines = left,
    xlabel = \(r\),
    ylabel = {\(w\)},
    xticklabels=\empty,
    yticklabels=\empty
    ]
    \addplot[domain=0:2.2]{0};
    \addplot[name path=f, 
    domain=0:2
    ] {x+x^3}; 
    \addplot[name path=g,
    domain=0:2
    ] {x+2*x^3};
    
    \node at (axis cs:1.5,10) {$w_\theta$};
    \node at (axis cs:1.5,3.5) {$w_0$};
    \node at (axis cs:1.5,6.5) {$K$};
    \addplot [
        fill=blue, 
        fill opacity=0.2
    ]
    fill between[
        of=f and g
    ];
    \addplot [
    samples=100, 
    smooth,
    domain=0:2,
    black] coordinates {(2,10)(2,18)};
  \end{axis}
\end{tikzpicture}    
\end{center}

Now let us define a constant $C$ as follows. First, we define the following constants:
\begin{align*}
    C_1 &\doteqdot \sup \left\{(1+v^2)^{1+\beta}g\left(\frac{v}{r(1+v^2)^\beta},1\right):(r,v) \in K-\{(0,0)\}\right\}\\
    C_2 &\doteqdot \sup \left\{2\beta v(1+v^2)^\beta g\left(\frac{v}{r(1+v^2)^\beta},1\right): (r,v) \in K-\{(0,0)\} \right\}\\
    C_3 &\doteqdot \max \{C_1, C_2\}\\
    C_4 &\doteqdot e^{C_3 r_\theta}
\end{align*}
$C_1,C_2$ are finite due to the fact that $v,\frac{v}{r(1+v^2)^\beta}$ are bounded in $K-\{(0,0)\}$. Hence all of the constants above are finite.

Now define $C\doteqdot \max \{C_i: i=1,...,4\}$.
\begin{proposition} \label{unifconv}
    The sequence of functions $\{v_n\}$ converges uniformly on $(0,r_\theta]$.
\end{proposition}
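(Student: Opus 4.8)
The plan is to show that $\{v_n\}$ is uniformly Cauchy on $(0,r_\theta]$, so that uniform convergence follows by completeness. Write $F(r,v)\doteqdot(1+v^2)^{1+\beta}g\!\left(\tfrac{v}{r(1+v^2)^\beta},1\right)$ for the right-hand side, so that each $v_n$ satisfies $v_n'=F(r,v_n)$ on $(1/n,r_\theta]$ and coincides with the subsolution $w$ of Proposition \ref{subsol} on $[0,1/n]$. The first observation is that the $v_n$ are pointwise ordered: for $m>n$ one has $v_m\ge v_n$ on $[0,r_\theta]$. Indeed, on $[0,1/m]$ both equal $w$; on $[1/m,1/n]$ the function $v_m$ solves the ODE with $v_m(1/m)=w(1/m)$ while $v_n=w$, so the comparison principle applied to the subsolution $w$ gives $v_m\ge w=v_n$; and on $[1/n,r_\theta]$ both solve the ODE with $v_m(1/n)\ge v_n(1/n)$, so by uniqueness (Picard--Lindel\"of, valid for $r>0$) they cannot cross and stay ordered. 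The same comparisons against $w_0=w$ and the supersolution $w_\theta$ show that every point $(r,v_n(r))$ lies in the compact set $K$, whence $0\le v_n'=F(r,v_n)\le C_1$.

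Next I bound the gap at the matching time. Put $\phi_{m,n}\doteqdot v_m-v_n\ge 0$. On $[1/m,1/n]$ both $v_m'$ and $w'(=v_n')$ lie in $[0,C_1]$ (the latter since $w'\le F(r,w)\le C_1$ by Proposition \ref{subsol}), so
\[
\phi_{m,n}(1/n)=\int_{1/m}^{1/n}\big(v_m'-w'\big)\,dr\le 2C_1\Big(\tfrac1n-\tfrac1m\Big)\le\frac{2C_1}{n}\,.
\]
It remains to propagate this small gap across $[1/n,r_\theta]$ with a Gronwall factor that does not blow up as $n\to\infty$.

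The mechanism is a \emph{one-sided} differential inequality, which exploits $\phi_{m,n}\ge 0$. On $[1/n,r_\theta]$ we have $\phi_{m,n}'=F(r,v_m)-F(r,v_n)=\int_{v_n}^{v_m}\partial_v F(r,\xi)\,d\xi$, so only an upper bound on $\partial_v F$ is needed. A direct computation gives
\[
\partial_v F=2(1+\beta)v(1+v^2)^\beta g\!\left(\tfrac{v}{r(1+v^2)^\beta},1\right)+\frac{1+(1-2\beta)v^2}{r}\,\partial_y g\!\left(\tfrac{v}{r(1+v^2)^\beta},1\right)\,,
\]
where $\partial_y g$ denotes the partial derivative of $g$ in its first argument. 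Since $f$ is increasing in every variable, implicit differentiation of $f(g(y,1),y\mathbf{e})=1$ shows $\partial_y g<0$, and $1+(1-2\beta)v^2>0$ because $\beta<1/2$; hence the second term is $\le 0$ on $K$. Using $2v\le 1+v^2$ to dominate the first piece, the positive part splits as $2v(1+v^2)^\beta g+2\beta v(1+v^2)^\beta g\le C_1+C_2\le 2C_3$. Thus $\partial_v F\le 2C_3$ on $K$, a bound \emph{uniform in $n$} because the singular negative $1/r$-term has simply been discarded. Therefore $\phi_{m,n}'\le 2C_3\,\phi_{m,n}$ on $[1/n,r_\theta]$, and Gronwall's inequality (the one-sided estimate underlying Theorem \ref{ODEestimate}) yields $\phi_{m,n}(r)\le\phi_{m,n}(1/n)\,e^{2C_3 r_\theta}\le\tfrac{2C_1}{n}\,C_4^{\,2}$ for $r\in[1/n,r_\theta]$. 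Combining with $\phi_{m,n}\le 2C_1/n$ on $[1/m,1/n]$ and $\phi_{m,n}=0$ on $[0,1/m]$, we get $\sup_{(0,r_\theta]}|v_m-v_n|\le\tfrac{2C_1}{n}\,C_4^{\,2}\to 0$ as $n\to\infty$, uniformly in $m>n$; so $\{v_n\}$ is uniformly Cauchy and converges uniformly.

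The main obstacle is precisely the singularity of $\partial_v F$ at the origin: a naive two-sided application of Theorem \ref{ODEestimate} on $[1/n,r_\theta]$ would require a Lipschitz constant of order $\log n$ in the exponent (coming from the $1/r$-term), which would destroy the estimate. The resolution -- and the reason $C_3$ is assembled only from the positive quantities $C_1$ and $C_2$ -- is that the pointwise ordering $v_m\ge v_n$ lets us replace the two-sided Lipschitz bound by the one-sided bound $\partial_v F\le 2C_3$, in which the offending negative term drops out.
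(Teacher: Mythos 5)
Your proof is correct and follows essentially the same route as the paper: a gap estimate of order $1/n$ at the matching time $r=1/n$, propagated across $[1/n,r_\theta]$ by a one-sided Gronwall inequality in which the singular negative term involving $g_y/r$ is discarded using the monotonicity of $g$ in its first argument. You additionally make explicit the ordering $v_m\ge v_n$ (which the paper uses implicitly when taking $\sup_{v\in[v_n(r),v_m(r)]}$ and dividing by $v_m-v_n$), and you correct a slip in the paper's differentiation --- the coefficient of the first term should be $2(1+\beta)v(1+v^2)^\beta g$ rather than $2\beta v(1+v^2)^\beta g$ --- absorbing the extra piece via $2v\le 1+v^2$ into the bound $C_1+C_2\le 2C_3$, which changes only the constants and not the argument.
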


\begin{proof}
    For $m>n$, we apply the mean value theorem to obtain the following estimate:
\begin{align*}
    v_m(1/n) &\leq v_m(1/m)+(1/n-1/m) \sup_{r \in [1/m,1/n]}v_m'(r)\\
    &\leq v_m(1/m)+(1/n)C_1\,.
\end{align*}
Thus,
\begin{equation} \label{eq:vmvnestimate}
   v_m(1/n)-v_n(1/n) \leq v_m(1/m)-v_n(1/n)+C_1/n\,.
\end{equation}

Now, from the definition of $v_n$, $v_n(1/n)=w(1/n)$ and $v_m(1/m)=w(1/m)$, where $w$ is given by $\frac{w}{r(1+w^2)^\beta}=\gamma$. Since $w$ is monotone increasing in $r$,
\begin{equation*}
    v_m(1/m)-v_n(1/n) \leq 0\,.
\end{equation*}
Thus (\ref{eq:vmvnestimate}) becomes
\begin{equation*}
    v_m(1/n)-v_n(1/n) \leq C_1/n\,.
\end{equation*}

Now we estimate $v_m(r)-v_n(r)$ for $r \in (1/n,r_\theta]$. Regarding the right hand side of (\ref{eqn:alphaode}) as a function of $v$ and applying the mean value theorem, we get
\begin{align*}
    v_m'(r)-v_n'(r) &= (1+v_m^2)^{1+\beta}g\left(\frac{v_m}{r(1+v_m^2)^\beta},1\right) - (1+v_n^2)^{1+\beta}g\left(\frac{v_n}{r(1+v_n^2)^\beta},1\right)\\
    &\leq  (v_m-v_n) \sup_{v\in [v_n(r),v_m(r)]} \left\{ 2\beta v(1+v^2)^\beta g\left(\frac{v}{r(1+v^2)^\beta},1\right)\right.\\&\left.+(1+v^2)g_y\left(\frac{v}{r(1+v^2)^\beta},1\right)\frac{(1+(1-2\beta)v^2)}{r(1+v^2)^{\beta+1}} \right\}\\
    &\leq  (v_m-v_n)\sup_{v\in [v_n(r),v_m(r)]} \left\{2\beta v(1+v^2)^\beta g\left(\frac{v}{r(1+v^2)^\beta},1\right)\right\}\\
    &\leq C_2(v_m-v_n)\,.
\end{align*}
We used the fact that $g$ is decreasing in the first slot in the penultimate step.

Dividing by the positive quantity $v_m-v_n$ and integrating on $[1/n,r]$,
\begin{align*}
    v_m(r)-v_n(r) &\leq (v_m(1/n)-v_n(1/n)) e^{C_2(r-1/n)}\\
    &\leq C^2/n\,.
\end{align*}
The claim follows by letting $n\to \infty$.
\end{proof}

Therefore ${v_n}$ converges uniformly to a continuous function on $(0,r_\theta)$. Since the sequence of functions $\{v_n\}$ is uniformly bounded, $v_n'$  has a uniformly continuous dependence on $v_n$, and hence $v_n'$ converges uniformly as well. Thus we have $C^1$-convergence of ${v_n}$ to some differentiable function $v$ on $(0,r_\theta)$. In particular, $v$ satisfies (\ref{eqn:alphaode}).

\begin{proposition}
    The limit function $v$ can be extended continuously to $r=0$. Moreover, the extended function is differentiable at $r=0$.
\end{proposition}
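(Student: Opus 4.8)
The plan is to extend $v$ by setting $v(0)\doteqdot 0$ and to prove both assertions by trapping $v$ between the subsolution $w$ and the supersolutions $w_\epsilon$ near the origin, then passing to limits. First I would check that the comparison inequalities survive the passage from the $v_n$ to their $C^1$-limit $v$. For each sufficiently large $n$, the function $v_n$ coincides with $w$ on $[0,1/n]$, so $v_n(1/n)=w(1/n)$; since $w$ is a subsolution (Proposition \ref{subsol}) and $v_n$ solves the ODE on $(1/n,r_\theta]$, the comparison lemma gives $w(r)\le v_n(r)$ there. Likewise, since $w_\epsilon$ is a supersolution on $(0,r_\epsilon)$ and $v_n(1/n)=w(1/n)=w_0(1/n)\le w_\epsilon(1/n)$ (using that $w_\epsilon$ is increasing in $\epsilon$ by Lemma \ref{lem:level set invertibility epsilon}), the comparison lemma yields $v_n(r)\le w_\epsilon(r)$ on $(1/n,\min\{r_\epsilon,r_\theta\})$. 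Letting $n\to\infty$ and invoking the $C^1$-convergence already established, I obtain the two-sided bound
\[
w(r)\le v(r)\le w_\epsilon(r)\qquad\text{for } r\in(0,\min\{r_\epsilon,r_\theta\})\,.
\]

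For the continuous extension, I would note that the defining relation $\frac{w}{r(1+w^2)^\beta}=\gamma$ forces $w(r)\to 0$ as $r\to 0^+$ (by Lemma \ref{lem:level set invertibility}, $w$ is increasing in $r$ with $w=0$ at $r=0$), and similarly $w_\epsilon(r)\to 0$. Squeezing $v$ between these in the displayed bound then gives $v(r)\to 0$, so the choice $v(0)=0$ extends $v$ continuously to the origin.

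For differentiability at the origin, since $v(0)=0$ it suffices to compute $\lim_{r\to 0^+}v(r)/r$. Dividing the defining relations for $w$ and $w_\epsilon$ by $r$ gives
\[
\frac{w(r)}{r}=\gamma\,(1+w(r)^2)^\beta\to\gamma,\qquad \frac{w_\epsilon(r)}{r}=\gamma_\epsilon(1+w_\epsilon(r)^2)^\beta\to\gamma e^{\epsilon}
\]
as $r\to 0^+$, using $w,w_\epsilon\to 0$. Dividing the sandwich bound by $r>0$ and passing to the lower and upper limits yields
\[
\gamma\le\liminf_{r\to0^+}\frac{v(r)}{r}\le\limsup_{r\to0^+}\frac{v(r)}{r}\le\gamma e^{\epsilon}\,.
\]
Since $\epsilon\in(0,\theta]$ is arbitrary, letting $\epsilon\to 0^+$ collapses these inequalities, so the limit exists and equals $\gamma$. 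Hence $v$ is differentiable at $r=0$ with $v'(0)=\gamma=1/f(1,\dots,1)^{1/\alpha}$, in agreement with the earlier heuristic.

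The hard part will be the bookkeeping in the first paragraph rather than any of the limit computations: one must ensure the supersolution barrier $w_\epsilon$ is valid on a fixed interval $(0,r_\epsilon)$ independent of $n$, and that the weak-inequality form of the comparison lemma (as in the Remark following it) is invoked, since $v_n$ and the barriers share initial values at $r=1/n$. Once the two-sided bound is secured uniformly in $n$ and transferred to $v$, the remaining squeeze arguments are routine.
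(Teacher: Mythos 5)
Your proof is correct and follows essentially the same route as the paper: trap $v$ (through the approximants $v_n$) between the subsolution $w$ and the supersolutions $w_\epsilon$, squeeze to get $v(r)\to 0$ so that $v(0)\doteqdot 0$ is the continuous extension, then divide the sandwich bound by $r$ and let $\epsilon\to 0$ to identify the difference quotient limit $v'(0)=\gamma$. The only difference is that the paper's proof additionally records $\lim_{r\to 0}v'(r)=g(\gamma,1)=\gamma$, i.e.\ continuity of $v'$ at the origin (needed later for the $C^1$ assertion in the existence lemma), which your argument omits but which the proposition as stated does not require.
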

\begin{proof}
    Recall that since we have a subsolution $w$ on $(0,r_\theta)$ and supersolutions $w_\epsilon=$ on $(0,r_\epsilon)$,
\begin{equation} 
    \gamma r \leq \frac{v_n}{(1+v_n^2)^\beta} \leq \gamma e^\epsilon r \text{  on } (0,r_\epsilon).
\end{equation}
Passing to the limit, we have
\begin{equation} \label{eqn: subsuper}
    \gamma r \leq \frac{v}{(1+v^2)^\beta} \leq \gamma e^\epsilon r \text{  on  } (0,r_\epsilon).
\end{equation}
Letting $r \to 0$, we see that $v(0)=0$. Also, dividing the same equation by $r$, we can evaluate $v'(0)= \lim_{r \to 0} v/r$.
\begin{equation*}
    \gamma \leq \frac{v/r}{(1+v^2)^\beta} \leq \gamma e^\epsilon \text{  on  } (0,r_\epsilon).
\end{equation*}
Letting $\epsilon \to 0$, we get
\[ v'(0) = \gamma\,. \]

Now we show differentiability at $r=0$. Estimating equation (\ref{IVP}) from above and below using (\ref{eqn: subsuper}),
\begin{equation}
    (1+w^2)g(\gamma e^\epsilon,1) \leq v'(r) \leq (1+w_\epsilon^2)g(\gamma,1)\,.
\end{equation}
Letting $\epsilon \to 0$,
\begin{equation}
    \lim_{r \to 0} v'(r)=g(\gamma,1)=\gamma=v'(0)\,.
\end{equation}
This completes the proof.
\end{proof}


Regarding the question of uniqueness, we are only interested in solutions $v$ of class $C^1$ up to the origin, because these correspond to $C^2$-solutions $u$ of (\ref{TODE}).

\begin{proposition}
    The solution $v$ as obtained above is the unique $C^1$-solution to the initial value problem (\ref{IVP}).
\end{proposition}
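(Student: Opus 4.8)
The plan is to exploit the one-sided Lipschitz estimate already isolated in the proof of Proposition \ref{unifconv}, which sidesteps the $1/r$ blow-up of the full Lipschitz constant of the equation near the origin. Write $F(r,v)$ for the right-hand side of \eqref{eqn:alphaode}. On any strip $\{r\ge r_0\}$ with $r_0>0$ the map $v\mapsto F(r,v)$ is locally Lipschitz, so by Theorem \ref{existence} two $C^1$-solutions that agree at a single point $r_0>0$ must coincide on their common maximal interval; thus it suffices to prove that any two $C^1$-solutions $v_1,v_2$ of \eqref{IVP} coincide on some interval $(0,\delta]$. Moreover, as noted at the beginning of this section, any $C^1$-solution is forced to satisfy $v'(0)=\gamma$, so $v_i(r)=\gamma r+o(r)$ and $\frac{v_i}{r(1+v_i^2)^\beta}\to\gamma$ as $r\to0$. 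Hence, for $\delta$ small enough, both graphs $(r,v_i(r))$, $r\in(0,\delta]$, lie in a single compact set $K'$ on which the argument of $g(\cdot,1)$ stays in a fixed closed subinterval of its domain, and which is vertically convex (for fixed $r$ the quantity $\frac{w}{r(1+w^2)^\beta}$ is increasing in $w$ by \eqref{dr/dw}, so the admissible values of $v$ at each height form an interval).

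The crux is the following one-sided bound, which is exactly the estimate carried out in the proof of Proposition \ref{unifconv}. Differentiating $F$ in $v$ produces two terms; the one carrying the singular factor $1/r$ is a product of $(1+v^2)^{1+\beta}$, the derivative of $g(\cdot,1)$ in its first slot (which is negative, since $g(\cdot,1)$ is decreasing there), and $\partial_v\frac{v}{r(1+v^2)^\beta}>0$, hence is nonpositive and may be discarded. This leaves
\[
F_v(r,v)\ \le\ 2\beta v(1+v^2)^\beta\, g\!\left(\frac{v}{r(1+v^2)^\beta},1\right)\ \le\ C
\]
on $K'$, where $C$ is the supremum of the middle expression over $K'$, finite by the same boundedness argument as for the constant $C_2$ of Proposition \ref{unifconv}. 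Since $K'$ is vertically convex, integrating this estimate in $v$ gives
\[
F(r,v_1)-F(r,v_2)\ \le\ C\,(v_1-v_2)\qquad\text{whenever } v_1\ge v_2 .
\]
Crucially, $C$ carries no factor of $1/r$.

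With this in hand I would run an energy estimate. Put $D=v_1-v_2$ on $(0,\delta]$. Applying the one-sided bound with the roles of $v_1,v_2$ ordered according to the sign of $D$, one finds $D\,D'=D\bigl(F(r,v_1)-F(r,v_2)\bigr)\le C\,D^2$ in every case, i.e. $\frac{d}{dr}D^2\le 2C\,D^2$. Integrating over $[r_0,r]\subset(0,\delta]$ yields $D(r)^2\le D(r_0)^2\,e^{2C(r-r_0)}$. Letting $r_0\to0^+$ and using $D(r_0)\to D(0)=0$ together with the boundedness of the exponential forces $D(r)^2\le0$, so $D\equiv0$ on $(0,\delta]$. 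Combined with the reduction of the first paragraph, this gives $v_1\equiv v_2$ on all of $[0,R)$, proving uniqueness.

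The main obstacle --- and the reason the standard uniqueness theory of Theorem \ref{existence} cannot be applied directly --- is precisely the coordinate singularity at $r=0$: the genuine Lipschitz constant of $F$ grows like $1/r$, so a Gronwall estimate anchored at the origin is worthless. The resolution, and the only delicate point, is the observation that the singular contribution to $F_v$ enters with a favourable sign because $g(\cdot,1)$ is decreasing in its first argument. This upgrades $F$ to a one-sided Lipschitz (dissipative) nonlinearity with a finite constant, which is exactly what is needed for a forward energy estimate that survives the limit $r_0\to0$.
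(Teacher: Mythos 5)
Your proposal is correct and follows essentially the same route as the paper: reduce to uniqueness near the origin (standard theory handles $r>0$), trap both solutions in a compact cone $K'$ using $v_i'(0)=\gamma$, discard the singular $1/r$ term in $F_v$ because $g(\cdot,1)$ is decreasing in its first slot, and run a Gronwall estimate anchored at $r_0>0$ before letting $r_0\to 0$. The only difference is cosmetic: the paper orders $v_1>v_2$ (via non-crossing of integral curves) and integrates directly, while you use the energy quantity $D^2$ to handle both signs at once — a slightly cleaner implementation of the same estimate.
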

\begin{proof}

    We first show uniqueness near the origin. The equation $f(x,y\mathbf{e})=1$ is solvable for $x$ when $y=\gamma$. Thus by the implicit function theorem, there exists $\epsilon>0$ such that this equation is solvable for $y \in (\gamma e^{-\epsilon}, \gamma e^\epsilon)$.
        Suppose that $v_1, v_2$ are two solutions, with initial conditions $v_1(0)=v_2(0)=0$. Now, since both solutions are $C^1$ up to the origin, $v_1'(0)=v_2'(0)=\gamma$, and hence there exists some $\delta>0$ such that the graphs of $v_1, v_2$ are contained in the compact set $K' \doteqdot \{(r,v): \gamma e^{-\epsilon}r \leq v \leq \gamma e^{\epsilon}r, \, 0 \leq r\leq \delta \}$ The slope field is nonsingular in $K'-\{(0,0)\}$, and hence integral curves do not intersect. Thus without loss of generality, we may assume $v_1>v_2$ in $K'-\{0\}$. Define $C'=\sup_{(r,v)\in K'-\{(0,0)\}}\left\{2\beta v(1+v^2)^\beta g \left(\frac{v}{r(1+v^2)^\beta},1\right)\right\}$.  Now let $\delta',r$ be real numbers such that $0<\delta'\leq r \leq \delta$. By an argument similar to  Proposition \ref{unifconv},
\begin{equation*}
    |v_2(r)-v_1(r)| \leq C'|v_2(\delta')-v_1(\delta')|
\end{equation*}
Since $v_1,v_2$ are continuous and agree at $r=0$, letting $\delta' \to 0$ shows that the solutions agree on $[0,\delta]$. This gives local uniqueness near the origin. Now by standard ODE theory, the solutions agree for as long as they are both defined.
\end{proof}

Due to Theorems \ref{existence} and \ref{extensibility}, we have proved the following.
\begin{lemma}
The initial value problem (\ref{IVP}) has a unique solution $v$ defined on some maximal interval $[0,R)$, where $0<R\leq\infty$. If $f$ is of class $C^k$, then $v$ is of class $C^{k+1}$ everywhere except possibly $r=0$. Moreover, $v$ and $v'$ are continuous up to $r=0$, with $v(0)=0$ and $v'(0)=\gamma$.
\end{lemma}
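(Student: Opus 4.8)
The plan is to assemble the pieces established in the preceding propositions and to invoke the general ODE theory of Section \ref{sec:prelims}. The existence of a solution $v$ on the interval $[0,r_\theta]$, together with the boundary behaviour $v(0)=0$, $v'(0)=\gamma$ and the continuity of $v$ and $v'$ up to $r=0$, has already been obtained from Proposition \ref{subsol}, the supersolution proposition, Proposition \ref{unifconv}, and the continuity/differentiability proposition. Likewise, uniqueness of the $C^1$-solution near the origin is precisely the content of the uniqueness proposition. So the only work remaining is to (a) continue $v$ to a maximal interval $[0,R)$, (b) promote the local uniqueness to global uniqueness, and (c) upgrade the regularity away from the origin.

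For (a) and (b) I would work entirely on the region $r>0$, where the coordinate singularity is absent. There the right-hand side of \eqref{eqn:alphaode}, viewed as a function of $(r,v)$, is $C^1$: the implicit function theorem gives $g(\cdot,1)\in C^1$ (as noted when $g$ was introduced), while the prefactor $(1+v^2)^{1+\beta}$ and the inner argument $v/(r(1+v^2)^\beta)$ are smooth for $r>0$. In particular the right-hand side is locally Lipschitz in $v$, uniformly in $r$, so Theorem \ref{existence} yields local existence and uniqueness at every point with $r>0$. Starting from the solution already built on $[0,r_\theta]$, Theorem \ref{extensibility} then extends $v$ to a maximal interval $[0,R)$ with $0<R\le\infty$, the solution persisting for as long as $(r,v(r))$ stays in a compact subset of the domain $\{r>0\}$; this is exactly what determines $R$. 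Global uniqueness follows by patching the local uniqueness at the origin (the uniqueness proposition) with the off-origin uniqueness from Theorem \ref{existence}: any two $C^1$-solutions coincide on some $[0,\delta]$ and hence, by the usual continuation argument, on all of $[0,R)$.

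For (c) I would again appeal to the implicit function theorem. If $f\in C^k$, then the implicitly defined function $g(\cdot,1)$ is also $C^k$, so the right-hand side of \eqref{eqn:alphaode} is $C^k$ in $(r,v)$ on $\{r>0\}$. The regularity clause of Theorem \ref{existence} then improves $v$ from $C^1$ to $C^{k+1}$ on $(0,R)$, which is precisely the assertion ``$C^{k+1}$ everywhere except possibly $r=0$.'' The restriction to $r>0$ is essential, since the singular point is where higher regularity can genuinely fail.

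I do not expect a serious obstacle here, as the delicate analysis — controlling $v$ across the singularity at $r=0$ — was already carried out in the preceding propositions. The only point requiring care is the bookkeeping: checking that the Lipschitz and $C^k$ hypotheses of Theorems \ref{existence} and \ref{extensibility} truly hold on the punctured domain $r>0$, and then combining the local uniqueness at the origin with the off-origin uniqueness so that they merge into a single unique solution on the full maximal interval $[0,R)$.
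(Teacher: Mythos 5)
Your proposal is correct and follows essentially the same route as the paper: the paper likewise obtains this lemma by assembling the preceding propositions (existence on $[0,r_\theta]$ with $v(0)=0$, $v'(0)=\gamma$, and local $C^1$-uniqueness at the origin) and then simply invoking Theorems \ref{existence} and \ref{extensibility} for continuation to the maximal interval, global uniqueness, and the $C^{k+1}$ regularity away from $r=0$ via the $C^k$ regularity of $g$ from the implicit function theorem. Your write-up in fact spells out the bookkeeping (Lipschitz hypothesis on $\{r>0\}$, the patching of local and off-origin uniqueness) more explicitly than the paper does.
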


From this, we recover $u$ using the formula $u(r)=\int_0^r v(\rho) d\rho$. Note that $u$ is $C^2$ at $r=0$. Thus we have the following existence result for bowl-type solitons.

\begin{theorem}\label{thm:classical solution}
Let $f$ be an admissible speed. There exists a unique bowl-type soliton with velocity $e_{n+1}$ correspinding to it. The bowl-type soliton is the graph of a function $u:B_R\to\R$, where $0<R\leq\infty$.  If $f$ is of class $C^k$, then $u$ is of class $C^{k+2}$ everywhere except possibly the origin, and at least $C^2$ at the origin.
\end{theorem}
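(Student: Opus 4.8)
The plan is to translate the existence and uniqueness statement for the initial value problem \eqref{IVP}, established in the Lemma immediately above, into the geometric language of the theorem; the only genuinely new content is the regularity of the resulting radial function at the origin. First I would invoke that Lemma to obtain a unique solution $v\in C^1([0,R))$ of \eqref{IVP} with $v(0)=0$ and $v'(0)=\gamma$, which is moreover of class $C^{k+1}$ away from the origin whenever $f\in C^k$. I then set $u(r)\doteqdot\int_0^r v(\rho)\,d\rho$, so that $u(0)=0$, $u'(0)=v(0)=0$, and $u''=v'$; in particular $u\in C^2([0,R))$ as a function of $r$, with $u'(0)=0$. Because \eqref{TODE} was derived precisely as the condition $\langle N,e_{n+1}\rangle=-f$ characterising a unit-speed translator in the direction $e_{n+1}$, and \eqref{IVP} is equivalent to \eqref{TODE} under $v=u'$, the rotational graph $y=u(|x|)$ is by construction a translating solution of \eqref{flow} moving with velocity $e_{n+1}$, with zero height and gradient at the origin. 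Completeness when $R<\infty$ follows from the maximality of $[0,R)$: were $v$ to remain bounded as $r\uparrow R$, the pair $(r,v)$ would stay in a compact subset of the domain of the right-hand side, and Theorem \ref{extensibility} would extend the solution past $R$, a contradiction; hence $u'=v\to\infty$ and the graph is unbounded in height, so complete.

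The main point is to upgrade the regularity of the one-variable profile $u$ to regularity of the radial function $x\mapsto u(|x|)$ on the ball $B_R\subset\R^n$. Away from the origin this is immediate: $|x|$ is smooth for $x\ne 0$, and $v\in C^{k+1}$ there gives $u\in C^{k+2}$ in $r$, whence $u(|x|)\in C^{k+2}(B_R\setminus\{0\})$. At the origin I would argue directly from $u'(0)=0$. Computing for $x\ne 0$
\[
\partial_i\partial_j\bigl(u(|x|)\bigr)=u''(|x|)\,\frac{x_ix_j}{|x|^2}+\frac{u'(|x|)}{|x|}\left(\delta_{ij}-\frac{x_ix_j}{|x|^2}\right),
\]
and using that $u\in C^2$ with $u'(0)=0$ forces $u'(r)/r\to u''(0)$ as $r\to 0$, the two contributions combine so that the limit of the Hessian is $u''(0)\,\delta_{ij}$, independently of the direction of approach. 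Thus the Hessian of $u(|x|)$ extends continuously across the origin with value $u''(0)I$, and a similar but easier computation handles the first derivatives; consequently $u(|x|)\in C^2(B_R)$. This is the step I expect to be the main (if routine) obstacle, since it is exactly here that the coordinate singularity of the radial ansatz at $r=0$ must be resolved, and it is what limits the regularity claimed at the origin to $C^2$ in the absence of the finer PDE input of Proposition \ref{smoothnessthm}.

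Finally I would establish uniqueness by exhibiting a bijection between bowl-type solitons moving with velocity $e_{n+1}$ and $C^1$-solutions of \eqref{IVP} with $v(0)=0$. Any such soliton is, by definition, the graph of a $C^2$ rotational function with zero height and gradient at the origin; its profile $u$ satisfies \eqref{TODE} with $u(0)=u'(0)=0$, so $v\doteqdot u'$ is a $C^1$-solution of \eqref{IVP}. The uniqueness of this $v$, already proved, forces $u=\int_0^{\,\cdot} v$ to coincide with the function constructed above, so the bowl-type soliton is unique. Assembling these observations—construction of the profile, verification that its graph is a complete translator, the origin regularity computation, and the uniqueness correspondence—yields the theorem.
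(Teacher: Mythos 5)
Your proposal follows the paper's route for this theorem essentially verbatim: the paper deduces it in two lines from the preceding lemma (unique $C^1$ solution $v$ of \eqref{IVP} on a maximal interval $[0,R)$, of class $C^{k+1}$ away from the origin when $f\in C^k$) together with the recovery formula \eqref{uformula}. Your added verifications are correct and merely make explicit what the paper compresses into ``Note that $u$ is $C^2$ at $r=0$'': the Hessian computation showing $D^2\bigl(u(|x|)\bigr)\to u''(0)\,\delta_{ij}$ as $x\to0$ (using $u'(r)/r\to u''(0)$, valid since $v'$ is continuous up to $r=0$ with $v'(0)=\gamma$), and the correspondence between bowl-type solitons with velocity $e_{n+1}$ and $C^1$ solutions of \eqref{IVP}, which reduces uniqueness of the soliton to the already-proved uniqueness of $v$.

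There is, however, one genuine non sequitur, in a step you added that the paper itself does not attempt: in the completeness paragraph you infer from $v\to\infty$ as $r\uparrow R$ that ``the graph is unbounded in height, so complete.'' Blow-up of $v=u'$ at finite $R$ does not imply $u(r)=\int_0^r v\,d\rho\to\infty$; for instance $v(r)=(R-r)^{-1/2}$ blows up while $\int_0^R v<\infty$, and then the profile curve has finite length, so the graph over the open ball would be incomplete. Quantitatively, since the solution satisfies $\frac{v}{r(1+v^2)^\beta}\ge\gamma$ and $g(\cdot,1)$ is decreasing, one has $v'\le\gamma(1+v^2)^{1+\beta}$, which upon integration near a blow-up time gives only $v(r)\ge c\,(R-r)^{-\alpha/(2\alpha-1)}$; this exponent is at least $1$ (forcing $u\to\infty$, hence divergence of the profile length $\int_0^R\sqrt{1+v^2}\,dr$ and completeness) precisely when $\alpha\le1$, so for $\alpha>1$ unbounded height requires a separate matching estimate and does not follow from blow-up of $v$ alone. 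Since the paper's proof of this theorem sidesteps completeness entirely (treating the maximal graph as the bowl-type soliton), the clean options are to either supply such a growth estimate on $u$ in the regimes where $R<\infty$ or omit the completeness claim at this stage, as the paper does; as written, the implication is false.
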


This proves, in particular, the first part of Theorem \ref{thm:entire}.

\section{Smoothness at the origin}

For higher regularity at the origin, we apply the PDE lemma (Proposition \ref{smoothnessthm}). So let us cast our bowl-type soliton (near $0$) as the solution to a (fully nonlinear) elliptic PDE.

Recall that, for a graph $M=\graph u$, the component matrix of the Weingarten tensor is given by
\[
W=g^{-1}\cdot\sff\,,
\]
where
\[
\sff=\frac{D^2u}{\sqrt{1+\vert Du\vert^2}}
\]
is the component matrix of the second fundamental form and 
\[
g^{-1}=\fff-\frac{Du\otimes Du}{1+\vert Du\vert^2}
\]
is the component matrix of the cometric. Since $W$ is not in general a symmetric matrix, we consider instead the matrix \cite{Ur91}
\[
\tilde W\doteqdot P\cdot \sff\cdot P\,,
\]
where $P$, a square root of $g^{-1}$, is given by
\[
P=\fff-\frac{Du\otimes Du}{\sqrt{1+\vert Du\vert^2}\big(1+\sqrt{1+\vert Du\vert^2}\big)}.
\]
Note that $\tilde W$ is symmetric and has the same eigenvalues as $W$. Thus, if $M$ is a translating solution to the flow by speed $f$, then $u$ is a solution to the equation
\begin{equation}\label{graphical translator equation}
-\hat f\left(Du,D^2u\right)=-\frac{1}{\sqrt{1+\vert Du\vert^2}}\,,
\end{equation}
where $\hat f:\R^n\times S_+^{n\times n}\to\mathbb R$ ($S_+^{n\times n}$ are the positive definite symmetric $n\times n$ matrices) is defined by
\[
\begin{split}
{}&\hat f(p,r)\doteqdot\\
{}&f\left(\!\left(\fff-\frac{p\otimes p}{\sqrt{1+\vert p\vert^2}\big(1+\sqrt{1+\vert p\vert^2}\big)}\right)\!\cdot\! \frac{r}{\sqrt{1+|p|^2}}\!\cdot\!\left(\fff-\frac{p\otimes p}{\sqrt{1+\vert p\vert^2}\big(1+\sqrt{1+\vert p\vert^2}\big)}\right)\!\right),
\end{split}
\]
where we treat $f$ as a function of a symmetric matrix $Z$ by evaluating it on the eigenvalues $z_1,\dots,z_n$ of $Z$.

Observe that $\hat{f}$ is of the same smoothness class as $f$ and
\begin{align*}
    \left(\frac{\partial \hat{f}}{\partial r_{ij}}\right)_{(Du,D^2u)}={}&\left(\frac{\partial f}{\partial r_{pq}}\right)_{(P \cdot \sff \cdot P)}P_{pi}P_{qj} \\
    \implies\;\; \frac{\partial \hat{f}}{\partial r_{ij}} \xi_i \xi_j={}&\frac{\partial f}{\partial r_{pq}}P_{pi}P_{qj}\xi_i \xi_j >0
\end{align*}
for all $(\xi_1,...,\xi_n) \in \mathbb{R}^n - \{0\}$, because $P$ is non-degenerate and the eigenvalues of the matrix $\left(\frac{\partial f}{\partial r_{pq}}\right)$, which are equal to $f_{z_i}$, $i=1,...,n$, are positive by hypothesis. This implies that \eqref{graphical translator equation} is elliptic

Finally, since we have proved that the solution $u$ corresponding to our bowl-type soliton is of class $C^2$, Proposition \ref{smoothnessthm} yields the following improvement of Theorem \ref{thm:classical solution}.

\begin{theorem}
There exists a unique bowl-type soliton for every admissible speed $f$. The bowl-type soliton is the graph of a function $u:B_R\to\R$, where $0<R\leq\infty$. If $f$ is of class $C^{k,\alpha}$, then $u$ is of class $C^{k+2,\alpha}$. In particular, if $f$ is smooth, then so is $u$.
\end{theorem}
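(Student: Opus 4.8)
The plan is to combine the existence, uniqueness, and $C^2$-regularity already obtained by the ODE analysis with the elliptic regularity supplied by Proposition \ref{smoothnessthm}, applied to the fully nonlinear formulation \eqref{graphical translator equation}. Existence and uniqueness of the bowl-type soliton, its realisation as the graph of $u : B_R \to \R$, and the fact that $u \in C^2(B_R)$ (indeed $C^{k+2}$ away from the origin) are exactly the content of Theorem \ref{thm:classical solution}. Hence the only genuinely new assertion is the upgrade of regularity to $C^{k+2,\alpha}$ uniformly across $B_R$, in particular at the origin.

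First I would isolate why the origin is the sole point at issue. The translator ODE \eqref{eqn:alphaode} has a coordinate singularity at $r=0$, so Theorem \ref{existence} delivers $v \in C^{k+1}$, and therefore $u \in C^{k+2}$, only on $(0,R)$; the limiting argument at the tip yields merely $C^2$ there. The graphical translator equation \eqref{graphical translator equation}, by contrast, is a genuinely elliptic PDE with no singularity at the origin, so it is precisely the tool needed to bootstrap regularity at that point.

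Next I would check that $u$, viewed as a solution of \eqref{graphical translator equation} on $\Omega = B_R$, satisfies the hypotheses of Proposition \ref{smoothnessthm}. Writing the equation in the form $F(Du, D^2u) = 0$ with
\[ F(p,r) \doteqdot \hat f(p,r) - \frac{1}{\sqrt{1+\vert p\vert^2}}\,, \]
the computation already performed shows that $\hat f$ inherits the smoothness of $f$, so $F \in C^{k,\alpha}$ whenever $f \in C^{k,\alpha}$, and that $\partial F/\partial r_{ij} = \partial \hat f/\partial r_{ij}$ is positive definite, so $F$ is strictly monotone increasing in the matrix variable. Thus the equation is elliptic and falls squarely within the scope of the proposition.

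Finally, since $u \in C^2(B_R)$, Proposition \ref{smoothnessthm} applies directly and gives $u \in C^{k+2,\alpha}(B_R)$ for each admissible $k \geq 1$; the smooth case follows from the final clause of the proposition (or by letting $k \to \infty$). The one point demanding care is the interface between the two methods: one must confirm that the $C^2$-regularity at the origin produced by the ODE limit is genuine classical $C^2$-regularity in Cartesian coordinates, so that $u$ really is an admissible $C^2$-solution of the PDE at the tip. Since $v$ and $v'$ were shown to extend continuously to $r=0$ with $v(0)=0$ and $v'(0)=\gamma$, the Cartesian second derivatives of $u$ are continuous there, so this hypothesis is indeed met and the argument closes. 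I expect this verification --- rather than the invocation of Schauder theory itself --- to be the subtle step.
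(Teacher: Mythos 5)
Your proposal is correct and follows the paper's own proof essentially verbatim: existence, uniqueness and $C^2$-regularity are quoted from Theorem \ref{thm:classical solution}, and the upgrade to $C^{k+2,\alpha}$ (including at the tip) is obtained exactly as in the paper by applying Proposition \ref{smoothnessthm} to the graphical translator equation \eqref{graphical translator equation}, whose ellipticity follows from the positive-definiteness of $\partial\hat f/\partial r_{ij}$ computed via the symmetrised Weingarten matrix. Your closing check that the ODE-produced $C^2$-regularity at $r=0$ is genuine Cartesian $C^2$-regularity is a point the paper treats implicitly, so it is a welcome but not divergent addition.
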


This proves the second part of Theorem \ref{thm:entire}.

\section{Low homogeneities}
Recall that regardless of $\alpha$, (\ref{subsol}) shows that the function $v_-$ implicitly defined by $\frac{v_-}{r(1+v_-^2)^\beta}=\gamma$ is a subsolution to the translator ODE. From \S \ref{levset} we infer that the solution $v$ satisfies $\frac{v}{r(1+v^2)^\beta} \geq \gamma >0$. In addition, $v\geq 1$ for sufficiently large $r$.

Now if $\alpha \leq 1/2$, we have that for sufficiently large $r$ (such that $v \geq 1$),
\begin{align*}
    v' &\leq \gamma (1+v^2)^{1+\beta}\\
    & \leq \gamma \sqrt{1+v^2}\\
    & \leq \sqrt{2} \gamma v \,.
\end{align*}
By comparing with Lemma \ref{ODElemma}, we see that the corresponding bowl-type soliton is entire.

This proves Theorem \ref{thm:low homogeneity}.

\section{Degenerate speeds}
We only need to discuss what happens if $\alpha >1/2$. We formulate the degeneracy condition $f(0,\mathbf{e})=0$ as  $\lim_{s \to 0}$ $f(s,\mathbf{e})=0$, as $f$ may be undefined when one of its inputs is zero. Note that in the equation $f(x,y\mathbf{e})=1$, $x$ is decreasing with respect to $y$. If $x \to 0$ as $y\to y_0 < \infty$, then $\lim_{y \to y_0}f(0,y\mathbf{e})=1$ which violates our degeneracy hypothesis that $f(0,\mathbf{e})=0$. Thus $L \doteqdot \lim_{y \to \infty}x=\lim_{y \to \infty}g(y,1) \geq 0$

\subsection{The $L>0$ case}
If $L>0$, we have the inequality
\begin{align*}
    v'&>(1+v^2)^{1+\beta}L\\
    & > Lv^{2+2\beta}\,.
\end{align*}
Comparing with Lemma \ref{ODElemma} shows that solutions are defined on bounded domains provided $\beta>-1/2$, i.e. $\alpha>1/2$. This explains what we observed in the harmonic mean curvature case, and proves Theorem \ref{L>0}.

\subsection{The $L=0$ case}
Again we analyse the case of $\alpha >1/2$. Here it turns out that whether the bowl-type soliton is entire or nonentire depends on the the asymptotics of $g(y,1)$ as $y \to \infty$.

First, we note that since the function $v_-(r)$ implicitly defined by $\frac{v_-}{r(1+v_-^2)^\beta}=\gamma$ is a subsolution, the solution $v$ satisfies $v \geq v_-$, and hence $\frac{v}{r(1+v^2)^\beta} \geq \gamma >0$. We claim that $\frac{v}{r(1+v^2)^\beta}$ is in fact unbounded. We prove this by contradiction. Suppose this is not the case. Then there exists $M>0$ such that $\frac{v}{r(1+v^2)^\beta} \leq M$. This means $v(r)$ exists for all $r$. Define $g(M,1) \doteqdot \epsilon$. Then, since $L=0$ and $g(\cdot,1)$ is strictly decreasing, $\epsilon>0$. Since $g$ is monotone decreasing in the first argument, $g\left(\frac{v}{r(1+v^2)^\beta},1\right)\geq \epsilon$. This means
\begin{equation*}
    v' \geq (1+v^2)^{1+\beta} \epsilon
\end{equation*}
but since $\alpha>1/2$ (i.e. $\beta>-1/2$), this equation blows up at some finite $R$, leading to a contradiction. This proves our claim.
Indeed more is true: given any $N>0$, there exists $r_1>0$ such that $\frac{v}{r(1+v^2)^\beta}\geq N$ for $r \geq r_1$. This is because of the following lemma which holds for both degenerate and nondegenerate speeds $f$. The ``$\asymp$" symbol used below,, which is an equivalence relation on the asymptotics of two functions, is defined in \S \ref{levset}.

\begin{lemma}\label{eventualsub}
Suppose $N>0$ and that $g(N,1)>0$. Then the function $w_N$ defined implicitly by $\frac{w_N}{r(1+w_N^2)^\beta}=N$ is a subsolution to the translator ODE (\ref{eqn:alphaode}) for sufficiently large $r$.
\end{lemma}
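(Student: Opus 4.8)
The plan is to mirror the computation in Proposition \ref{subsol}, with one essential difference: there, the defining constant $\gamma$ satisfied the fixed-point relation $\gamma=g(\gamma,1)$, which forced the subsolution inequality to hold for all $r$; here $N$ is arbitrary, so the inequality will fail for small $r$ and we must rely on an asymptotic argument to recover it for large $r$.

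First I would compute $w_N'$ directly from the defining relation by inverting \eqref{dr/dw} with $m=N$, obtaining
\[
w_N'=N\,\frac{(1+w_N^2)^{1+\beta}}{1+(1-2\beta)w_N^2}\,.
\]
Next, since $\frac{w_N}{r(1+w_N^2)^\beta}=N$ by definition, the right-hand side of the translator ODE \eqref{eqn:alphaode} evaluated along $w_N$ collapses to $(1+w_N^2)^{1+\beta}g(N,1)$. The subsolution condition $w_N'<(1+w_N^2)^{1+\beta}g(N,1)$ is therefore equivalent, after cancelling the positive factor $(1+w_N^2)^{1+\beta}$, to the scalar inequality
\[
\frac{N}{1+(1-2\beta)w_N^2}<g(N,1)\,.
\]

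The key observation is that since $\beta<1/2$ we have $1-2\beta>0$, so the denominator $1+(1-2\beta)w_N^2$ grows without bound as $w_N\to\infty$. By Lemma \ref{lem:level set invertibility}, $r$ is increasing in $w_N$ along this level set, and the asymptotics recorded in \S\ref{levset} (namely $r\asymp w_N^{1-2\beta}/N$) show that $w_N\to\infty$ precisely as $r\to\infty$. Hence the left-hand side of the displayed inequality tends to $0$. Since $g(N,1)>0$ by hypothesis, there exists $r_0>0$ such that the inequality holds for all $r\ge r_0$, which is exactly the assertion that $w_N$ is a subsolution for sufficiently large $r$.

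There is no real obstacle here; the argument is routine once the right reduction is made. The only point demanding a moment of care is the claim that $w_N\to\infty$ as $r\to\infty$, and this is precisely where the standing hypothesis $\beta<1/2$ (equivalently $\alpha>0$) is used. I would remark that the hypothesis $g(N,1)>0$ is necessary and not merely convenient: it guarantees that $N$ lies in the interior of the domain of $g(\cdot,1)$ (so that $w_N$ is a genuine subsolution rather than a boundary object), and in the degenerate setting it is exactly the condition that permits $N$ to be taken arbitrarily large, which is how this lemma will be applied to conclude that $\frac{v}{r(1+v^2)^\beta}$ is unbounded.
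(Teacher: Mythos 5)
Your proof is correct. It shares its starting point with the paper's argument --- both invert \eqref{dr/dw} with $m=N$ to obtain $w_N'=N(1+w_N^2)^{1+\beta}/(1+(1-2\beta)w_N^2)$ and observe that along the level set the right-hand side of \eqref{eqn:alphaode} collapses to $(1+w_N^2)^{1+\beta}g(N,1)$ --- but the comparison step is genuinely different. The paper argues by growth rates, using the asymptotics of \S\ref{levset}: $w_N'\asymp r^{\alpha-1}$ while the right-hand side is $\epsilon_N(1+w_N^2)^{1+\beta}\asymp r^{3\alpha-1}$, and since $3\alpha-1>\alpha-1$ for $\alpha>0$ the right-hand side eventually dominates. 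You instead cancel the common positive factor $(1+w_N^2)^{1+\beta}$ exactly, reducing the subsolution condition to the scalar inequality $N/(1+(1-2\beta)w_N^2)<g(N,1)$, and then let $w_N\to\infty$, which you correctly justify via $1-2\beta=1/\alpha>0$ and Lemma \ref{lem:level set invertibility}. The two proofs encode the same estimate --- your denominator $1+(1-2\beta)w_N^2\asymp r^{2\alpha}$ is precisely the exponent gap between $r^{3\alpha-1}$ and $r^{\alpha-1}$ in the paper --- but your rendition is the sharper and more self-contained one: it eliminates the implicit constants hidden in $\asymp$ and makes completely explicit where the standing hypothesis $\beta<1/2$ enters, and it mirrors Proposition \ref{subsol} faithfully, exposing the fixed-point relation $\gamma=g(\gamma,1)$ as the reason that the case $N=\gamma$ works for all $r$. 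One tiny caveat on your framing: for $N<\gamma$ the reduced inequality in fact holds for \emph{all} $r>0$ (since then $g(N,1)>g(\gamma,1)=\gamma>N$), so your remark that ``the inequality will fail for small $r$'' is accurate only for $N>\gamma$; this affects nothing, as the lemma claims only that the subsolution property holds for sufficiently large $r$.
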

\begin{proof}
For large $r$, by (\ref{dw/dr}) we have $w_N' \asymp r^{\alpha-1}$. On the other hand, the asymptotics of the right hand side of the ODE is
\begin{align*}
    (1+w_N^2)^{1+\beta} g\left(\frac{w_N}{r(1+w_N^2)^\beta},1\right) &= \epsilon_N (1+w_N^2)^{1+\beta}\\
    & \asymp (r^{2\alpha})^{1+\beta}\\
    &= (r^{2\alpha})^{3/2-1/2\alpha}\\
    &= r^{3\alpha-1}
\end{align*}
where $\epsilon_N \doteqdot g(N,1) >0$. Thus for sufficiently large $r$, $w_N$ is a subsolution to (\ref{eqn:alphaode}).
\end{proof}

Now we can consider the asymptotics of $g(y,1)$ as $y \to \infty$.

Suppose $g(y,1)=O(y^{1-2\alpha})$, i.e. there exists a constant $C>0$ such that $g(y,1) \leq C y^{1-2\alpha} $ for sufficiently large $y$. Then for sufficiently large values of $r$,

\begin{align*}
    v' &= (1+v^2)^{1+\beta} g\left(\frac{v}{r(1+v^2)^\beta},1\right)\\
    & \leq C(1+v^2)^{1+\beta} \left(\frac{r(1+v^2)^\beta}{v}\right)^{2\alpha-1}\\
    &\leq  C' v r^{2\alpha-1}\,.
\end{align*}
Now, by comparing this to Lemma \ref{ODElemma} we see that $v$ exists for all $r>0$. In this case, the bowl-type soliton is entire.

In contrast with the previous case, now suppose there exists $C>0$ and some $k<2\alpha-1$ such that $g(y,1)>Cy^{-k}$. Define the positive number $\epsilon$ by the relation $\frac{\epsilon}{1-2\beta}=2\alpha-1-k$. Then,
\begin{align*}
    v' &= (1+v^2)^{1+\beta} g\left(\frac{v}{r(1+v^2)^\beta},1\right)\\
    & \geq C(1+v^2)^{1+\beta} \left(\frac{r(1+v^2)^\beta}{v}\right)^{2\alpha-1-\epsilon/(1-2\beta)}\\
    & \geq C' v^{1+\epsilon}\,.
\end{align*}
Comparing this to Lemma \ref{ODElemma}, one sees that the solution $v$ blows up at some finite $r=R$. In this case, the bowl-type soliton exists over the ball $B_R$.

This proves Theorem \ref{L>0 refined}.

\begin{remark}
    As mentioned in the introduction, this analysis leaves out the case when $x = O(y^{-k})$ for $k<2\alpha-1$ but not for $k=2\alpha-1$. This is because it is not possible to decide just using this condition in this boundary case whether the solution is entire or over a bounded domain. Consider for $p>0$ the differential equation $v'=v (\log v)^p$ with initial condition $v(r_0)=v_0>1$. The right hand side is $O(v^\theta)$ for $\theta >1$ but not $\theta=1$. This equation blows up at some finite $r$ if and only if $p>1$. The author suspects that a more general integrability condition on $g(\cdot,1)$ could be used to provide a complete classification, but since typical applications involve algebraic functions of the principal curvatures and transcendental functions are extremely rare, the criteria provided here are more readily applicable.
\end{remark}

\section{Nondegenerate speeds}

Finally, we prove the asymptotic expansion for bowl-type solitons of flows by nondegenerate speeds.
\subsection{Entireness}
As done in previous section, we can extend $f$ to the boundary $\partial \Gamma^n_+$ of the positive cone by taking limits. Suppose $f(0,\mathbf{e})>0$ as in Theorem \ref{thm:entire}. Let $\gamma'\doteqdot f(0,\mathbf{e})$. Then $g_1(0,1)=1/(\gamma')^{1/\alpha}$ is finite as well (Refer to \S \ref{sec:TODE} for the definition of $g_1$.)  Let $\gamma_+\doteqdot g_1(0,1)$. Then we have that $g_1(0,1)=\gamma_+ \iff g(\gamma_+,1)=0$. Now, the function $v_+(r)$ defined by $\frac{v_+}{r(1+v_+^2)^\beta}=\gamma_+$ is  precisely where the slope field vanishes. Also we see that $v_+$ is a supersolution to (\ref{eqn:alphaode}) because
\begin{align*}
    v_+' &> 0\\
         &=(1+v_+^2)^{1+\beta}g\left(\frac{v_+}{r(1+v_+^2)^\beta},1\right)\,.
\end{align*}

We showed in Proposition \ref{subsol} that the function $v_-(r)$ defined by $\frac{v_-}{r(1+v_-^2)^\beta}=\gamma$, where $\gamma \doteqdot 1/f(1,...,1)^{1/\alpha}$, is a subsolution to (\ref{eqn:alphaode}). Since we have subsolution and a supersolution, both defined on $[0,\infty)$, it follows from Theorem \ref{extensibility} that the solution $v$ is also defined on $[0,\infty)$. Using the formula $u(r)=\int_0^r v(\rho) d\rho$, we see that $u$ is defined on $[0,\infty)$. Therefore we have the following theorem.
\begin{theorem}
If $f(0,\mathbf{e})>0$, then the corresponding bowl-type soliton is entire.
\end{theorem}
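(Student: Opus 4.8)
The plan is to bracket the solution $v$ between two barriers, both defined on all of $[0,\infty)$, and then invoke the extensibility result, Theorem~\ref{extensibility}. A subsolution on $[0,\infty)$ is already in hand: Proposition~\ref{subsol} gives the level curve $v_-$ defined by $\frac{v_-}{r(1+v_-^2)^\beta}=\gamma$ with $\gamma=1/f(1,\dots,1)^{1/\alpha}$, which by Lemma~\ref{lem:level set invertibility} is well-defined and increasing for all $r\ge 0$. What remains is to produce a matching supersolution on $[0,\infty)$, and this is exactly where the nondegeneracy hypothesis $f(0,\mathbf{e})>0$ enters.

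To build the supersolution, I would first extend $f$ continuously to $\partial\Gamma^n_+$ and set $\gamma'\doteqdot f(0,\mathbf{e})>0$. Since $f$ is $\alpha$-homogeneous and strictly monotone, the inverse function $g_1$ from \S\ref{sec:TODE} yields the \emph{finite} value $\gamma_+\doteqdot g_1(0,1)=1/(\gamma')^{1/\alpha}$, and the defining relation for $g_1$ is equivalent to $g(\gamma_+,1)=0$. Geometrically, $\gamma_+$ is the level-set parameter at which the right-hand side of the translator ODE \eqref{eqn:alphaode} vanishes. I would then define $v_+$ implicitly by $\frac{v_+}{r(1+v_+^2)^\beta}=\gamma_+$, which is again well-defined and increasing on $[0,\infty)$ by Lemma~\ref{lem:level set invertibility}. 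Along this curve the right-hand side of \eqref{eqn:alphaode} is $(1+v_+^2)^{1+\beta}g(\gamma_+,1)=0$, whereas $v_+'>0$; hence $v_+$ is a strict supersolution. Note that $g(\gamma,1)=\gamma>0=g(\gamma_+,1)$ and $g(\cdot,1)$ is decreasing, so $\gamma_+>\gamma$ and therefore $v_+\ge v_-$ by Lemma~\ref{lem:level set invertibility epsilon}, confirming that the barriers are correctly ordered.

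With $v_-$ and $v_+$ both defined on all of $[0,\infty)$ and sharing the value $0$ at $r=0$, the comparison lemma traps the solution, $v_-\le v< v_+$, on its interval of existence. Because $v_+$ stays finite for every finite $r$, the solution cannot blow up in finite $r$; applying Theorem~\ref{extensibility} on each interval $[0,T]$ extends $v$ to all of $[0,\infty)$. Recovering $u$ via \eqref{uformula} then shows the bowl-type soliton is defined over all of $\R^n$, i.e.\ is entire.

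The heart of the argument is the ceiling curve $v_+$: in the nondegenerate case the slope field genuinely vanishes along a level set, furnishing an upper barrier that the solution cannot cross. The one delicate point is verifying that $\gamma_+$ is finite, which is precisely the role of the assumption $f(0,\mathbf{e})>0$---a degenerate speed would send $\gamma_+\to\infty$ and destroy the upper barrier. Once this barrier is established, the trapping and the extension are routine.
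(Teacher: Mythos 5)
Your proof is correct and takes essentially the same route as the paper: the supersolution is the level curve $v_+$ defined by $\frac{v_+}{r(1+v_+^2)^\beta}=\gamma_+$ with $\gamma_+=g_1(0,1)=1/f(0,\mathbf{e})^{1/\alpha}$ (finite precisely by nondegeneracy, with $g(\gamma_+,1)=0$ making the slope field vanish there), paired with the subsolution $v_-$ from Proposition~\ref{subsol} and extension via Theorem~\ref{extensibility}. Your explicit check that the barriers are ordered, $\gamma_+>\gamma$ from $g(\gamma,1)=\gamma>0=g(\gamma_+,1)$ and the monotonicity of $g(\cdot,1)$, is a small addition the paper leaves implicit, but the argument is the same.
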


\subsection{Asymptotics}
Let $v$ be the solution to (\ref{IVP}).  We prove the following 
\begin{proposition}
Suppose $f$ is a nondegenerate speed. Let $v$ be the solution to (\ref{IVP}). $\frac{v}{r(1+v^2)^\beta} \to \gamma_+$ as $r \to \infty$
\end{proposition}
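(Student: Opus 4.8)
The plan is to study the quantity $z(r)\doteqdot\frac{v}{r(1+v^2)^\beta}$, which is exactly the argument fed into $g$ in the Translator ODE, and to show $z(r)\to\gamma_+$ by squeezing. The entireness argument of the previous subsection already traps $v$ between the subsolution $v_-$ (from Proposition \ref{subsol}, with $\frac{v_-}{r(1+v_-^2)^\beta}=\gamma$) and the supersolution $v_+$ (with $\frac{v_+}{r(1+v_+^2)^\beta}=\gamma_+$), both defined on $[0,\infty)$. Since $v\mapsto z$ is strictly increasing at each fixed $r$ (equivalently, at fixed $r$ the level-set value is increasing in $v$, Lemma \ref{lem:level set invertibility epsilon}), this trapping gives $\gamma\le z(r)\le\gamma_+$ for all $r>0$. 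Thus the upper bound is free, and it suffices to prove $\liminf_{r\to\infty}z(r)\ge\gamma_+$.

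To get the lower bound I would fix an arbitrary $N$ with $\gamma<N<\gamma_+$; note $\gamma<\gamma_+$ since $f(1,\dots,1)>f(0,\mathbf{e})$ by strict monotonicity. Because $g(N,1)>0$, Lemma \ref{eventualsub} supplies $r_0$ such that the function $w_N$ (defined by $\frac{w_N}{r(1+w_N^2)^\beta}=N$) is a genuine subsolution of \eqref{eqn:alphaode} on $[r_0,\infty)$. If I can produce a single point $r_1\ge r_0$ with $v(r_1)\ge w_N(r_1)$, then the sub/supersolution comparison lemma forces $v\ge w_N$ for all $r\ge r_1$, and hence $z(r)\ge N$ there. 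Letting $N\uparrow\gamma_+$ then yields $\liminf_{r\to\infty}z(r)\ge\gamma_+$, finishing the proof. So everything reduces to showing that $v$ eventually overtakes $w_N$.

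The crux is to rule out the alternative, namely that $z(r)<N$ (equivalently $v(r)<w_N(r)$) for all $r\ge r_0$. In that scenario, $g$ being decreasing in its first slot gives $g(z,1)\ge g(N,1)\doteqdot\epsilon_N>0$, so $v'\ge\epsilon_N(1+v^2)^{1+\beta}\ge\epsilon_N v^{\,3-1/\alpha}$ once $v\ge1$. On the other hand, $z<N$ means $v<w_N$, and the level-set asymptotics of \S\ref{levset} give $w_N(r)\asymp(Nr)^\alpha$, so $v=O(r^\alpha)$. These two facts are incompatible: integrating the differential inequality exactly as in Lemma \ref{ODElemma} shows that $v$ grows strictly faster than $r^\alpha$. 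Indeed, $v$ blows up in finite time when $\alpha>1/2$ (exponent $3-1/\alpha>1$), grows exponentially when $\alpha=1/2$, and grows like $r^{\alpha/(1-2\alpha)}$ when $\alpha<1/2$, whose exponent $\frac{\alpha}{1-2\alpha}$ exceeds $\alpha$ for every $\alpha>0$. In all three cases $v$ eventually passes $w_N\asymp r^\alpha$, contradicting $v<w_N$; hence $v$ must meet $w_N$ at some $r_1\ge r_0$, as required.

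The main obstacle is this final growth comparison in the low-homogeneity range $\alpha\le1/2$, where the clean finite-time blow-up argument that disposes of $\alpha>1/2$ is simply unavailable. There one must extract the precise polynomial (or exponential) growth rate from the differential inequality $v'\ge\epsilon_N v^{\,3-1/\alpha}$ and verify that its exponent strictly dominates the $r^\alpha$ rate of the barrier $w_N$. The remaining technical points are routine: keeping track of the constants hidden in the $\asymp$ estimates, and being careful to invoke the comparison lemma only on the interval $[r_0,\infty)$ where $w_N$ is actually a subsolution.
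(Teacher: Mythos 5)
Your proposal is correct and follows essentially the same route as the paper: a contradiction argument showing that if $\frac{v}{r(1+v^2)^\beta}$ stayed below a level $N<\gamma_+$ then $v'\ge \epsilon_N(1+v^2)^{1+\beta}$ would force growth strictly faster than the $r^\alpha$ rate of the barrier $w_N$ (finite-time blow-up for $\alpha>1/2$, exponential for $\alpha=1/2$, polynomial of exponent $\frac{\alpha}{1-2\alpha}>\alpha$ for $\alpha<1/2$), followed by Lemma \ref{eventualsub} and the comparison lemma to make the bound persist, and the supersolution $v_+$ for the upper bound. Your only cosmetic deviations are parametrizing the level as $N\in(\gamma,\gamma_+)$ rather than $(1-\epsilon)\gamma_+$ and stating explicitly the upper bound $\frac{v}{r(1+v^2)^\beta}\le\gamma_+$ that the paper leaves implicit.
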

\begin{proof}
Let $\epsilon>0, r_0>0$. We claim that there exists $ r_1 >r_0$ such that $ \frac{v(r_1)}{r_1(1+v(r_1)^2)^\beta} \geq (1-\epsilon)\gamma_+$.

Suppose this was not the case, i.e. for some $\epsilon>0$ we have $ \frac{v}{r(1+v^2)^\beta} <(1-\epsilon)\gamma_+$ for all $ r>r_0$ . Note that since $g$ is decreasing in the first slot, $g\left(\frac{v}{r(1+v^2)^\beta},1\right)>g((1-\epsilon)\gamma_+,1) \doteqdot  \epsilon'$. Then we have
\begin{equation*}
v'>(1+v^2)^{1+\beta}\epsilon'>(v^2)^{1+\beta}\epsilon'=v^{3-1/\alpha}\epsilon'    
\end{equation*}
which implies that
\begin{equation} \label{diffineq}
    v^{1/\alpha-3}v'>\epsilon'\,.
\end{equation}
If $\alpha >1/2$, then $3-1/\alpha>1$ so that $v$ blows up at some finite $R$. So we focus on the case that $\alpha \in (0,1/2]$. Let $r_0$ be any positive number, and define $v_0 \doteqdot v(r_0)$.

In case $\alpha=1/2$, the differential equality (\ref{diffineq}) becomes 
\[
v'/v>\epsilon'
\;\;\implies\;\; v>v_0 e^{r-r_0}\,.
\]
If $\alpha \in (0,1/2)$ then the differential inequality becomes

\begin{align*}
&\frac{(v^{1/\alpha-2})'}{1/\alpha-2} > \epsilon'\\
\implies\;\; &v(r)^{1/\alpha-2}-v_0^{1/\alpha-2}>(1/\alpha-2)\epsilon'(r-r_0)\\
\implies\;\; &v>[C+C'(r-r_0)]^\frac{1}{1/\alpha-2}\,,
\end{align*}
where $C=v_0^{1/\alpha-2}, C'=(1/\alpha-2)\epsilon'$.

Note that
\[
\frac{1}{1/\alpha-2}>\alpha \iff 1>\alpha(1/\alpha-2)=1-2\alpha \iff \alpha >0
\]
provided $\alpha <1/2$.

We have already shown that $\frac{v}{r(1+v^2)^\beta}=(1-\epsilon)\gamma_+$ implies $v$ grows like $r^\alpha$. This is in contradiction to what we have just shown, which is that $v$ grows like $r^{1/ \alpha-2}$ when $\alpha \in (0,1/2)$ and like $e^r$ when $\alpha=1/2$, both of which are strictly faster than $r^\alpha$.

Having proved this, now we conclude using Lemma \ref{eventualsub} that  for sufficiently large $r$, $\frac{v}{r(1+v^2)^\beta}$ stays above $(1-\epsilon)\gamma_+$.

This means that our solution $v$ exceeds $w$ as defined here, and due to upward monotonicity of the expression $\frac{w}{r(1+w^2)^\beta}$ with respect to $w$, we infer that 
\[
\frac{v}{r(1+v^2)^\beta}>\frac{w}{r(1+w^2)^\beta}=(1-\epsilon)\gamma_+
\]
The claim follows since $\epsilon$ is arbitrary.
\end{proof}
Therefore, we have that $v=\frac{r^\alpha}{f(0,\mathbf{e})}+ o(r^\alpha)$. Integrating, we see that the bowl-type soliton has the following asymptotics as $|x|\to \infty$:
\begin{equation*}
    u(|x|)=\frac{|x|^{\alpha+1}}{(\alpha+1)f(0,\mathbf{e})}+ o(|x|^{\alpha+1})\,.
\end{equation*}
This proves the asymptotic expansion that was asserted in Theorem \ref{thm:entire}.

\section{Convexity of solutions}
We show here that the bowl-type solitons that we have constructed are convex. Observe that for any admissible speed function, the function $v_-(r)$ defined by $\frac{v_-}{r(1+v_-^2)^\beta}=\gamma$, where $\gamma \doteqdot 1/f(1,...,1)^{1/\alpha}$ is a subsolution. For degenerate speeds, $\lim_{y \to \infty}g(y,1) \geq 0$ and for nondegenerate speeds the function $v_+(r)$ defined by $\frac{v_+}{r(1+v_+^2)^\beta}=\gamma_+$, where $\gamma_+\doteqdot g_1(0,1)$, is a supersolution. Therefore, for our solution $v$, the expression  $g\left(\frac{v}{r(1+v^2)^\beta},1\right)$
is positive in both the degenerate and nondegenerate cases for the following reason: $v$ is below this supersolution in the nondegenerate case, and in the degenerate case, $g(\cdot,1)$ is positive for all positive inputs. Therefore in either case, the solution satisfies $v'=(1+v^2)^{1+\beta} g\left(\frac{v}{r(1+v^2)^\beta},1\right)>0$, which implies that $u''>0$ for the profile curve $u$. Thus, $\kappa_1=\frac{u''}{(1+u'^2)^{3/2}}$. The remaining curvatures $\kappa_i=\frac{u'}{r\sqrt{1+u'^2}}$ are also positive because our subsolution guarantees that $u'=v\geq v_>0$. We conclude that the bowl-type solitons are convex.

\bibliographystyle{plain}
\bibliography{references}

\end{document}